\def \rn {\mathbb{R}^n}
\def \sn {\mathbb{S}^n}
\def \rr {\mathbb{R}}
\def \nn {\mathbb{N}}
\def \crit {2^\star}
\newtheorem{thm}{Theorem}
\newtheorem{prop}{Proposition}
\title[Admissible $Q-$curvatures]{Admissible $Q-$curvatures under isometries for the conformal GJMS operators}
\author{Fr\'ed\'eric Robert}
\address{Institut Elie Cartan, Universit\'e Nancy 1, BP 239, 54506 Vand\oe uvre-l\`es-Nancy, France}
\email{frobert@iecn.u-nancy.fr}
\date{March 11th 2010}
\begin{document}

\keywords{GJMS operator, $Q-$curvature, conformal geometry}
\subjclass{Primary 58J05; Secondary 35J35, 53A30}

\maketitle

\centerline{\large\it Dedicated to Jean-Pierre Gossez on the occasion of his 65th birthday}

\section{Introduction and statement of the main result}
Let $M$ be a compact manifold of dimension $n\geq 3$ and let $k\geq 1$ be an integer such that $k\leq \frac{n}{2}$ if $n$ is even. In their celebrated work, Graham-Jenne-Mason-Sparling \cite{gjms} provided a systematic construction of conformally invariant operators (GJMS operators for short) based on the ambient metric of Fefferman-Graham \cite{fg1,fg2}. More precisely, letting ${\mathcal M}$ be the set of Riemannian metrics on $M$, then for all $g\in {\mathcal M}$, there exists an operator $P_g: C^\infty(M)\to C^\infty(M)$ such that

\smallskip(i) $P_g$ is a differential operator and $P_g=\Delta_g^k+lot$

\smallskip (ii) $P_g$ is natural, that is $\varphi^\star P_g=P_{\varphi^\star g}$ for all smooth diffeomorphism $\varphi:M\to M$.

\smallskip (iii) $P_g$ is self-adjoint with respect to the $L^2-$scalar product

\smallskip (iv) Given $\omega\in C^\infty(M)$ and defining $\hat{g}=e^{2\omega}g$, we have that
\begin{equation}\label{eq:conf:e}
P_{\hat{g}}(f)=e^{-\frac{n+2k}{2}\omega}P_g\left(e^{\frac{n-2k}{2}\omega}f\right)\hbox{ for all }f\in C^\infty(M).
\end{equation}

\medskip\noindent Here $\Delta_g:=-\hbox{div}_g(\nabla)$ is the Laplace-Beltrami operator and $lot$ denotes differential terms of lower order. Point (iii) above is due to Graham-Zworski \cite{grzw}. For instance, on $\rn$ endowed with its Euclidean metric $\xi$, one has that $P_\xi=\Delta_\xi^k$. There is a natural scalar invariant, namely the $Q-$curvature, attached to the operator $P_g$: this scalar invariant, denoted as $Q_g$, was initially introduced by Branson and \O rsted  \cite{bo} for $n=2k=4$ and generalized by Branson \cite{br1,br2}. When $k=1$, the GJMS operator is the conformal Laplacian and the $Q-$curvature is the scalar curvature (up to a dimensional constant). When $k=2$, the GJMS operator is the Paneitz operator introduced in \cite{paneitz}.  When $n\neq 2k$, the $Q-$curvature is $Q_g:=\frac{2}{n-2k}P_g(1)$: when $n=2k$, the definition is much more subtle and involves a continuation in dimension argument (we refer to the survey Branson-Gover \cite{BrGo} and to Juhl \cite{juhlbook} for an exposition in book form). In the spirit of classical problems in conformal geometry, our objective here is to prescribe the $Q-$curvature in a conformal class; that is, given a conformal Riemannian class ${\mathcal C}$ on $M$ and a function $f\in C^\infty(M)$, we investigate the existence of a metric $g\in {\mathcal C}$ such that $Q_g=f$. As one checks (see Proposition \ref{prop:crit:pt} below), up to multiplication by a constant, this amounts to finding critical points of the perturbation of the Hilbert functional
$$\begin{array}{lll}
{\mathcal C} & \to &\rr\\
g & \mapsto & \displaystyle{\frac{\int_M Q_g\, dv_g}{V_f(M,g)^{\frac{n-2k}{n}}}}
\end{array}$$
where $V_f(M,g):=\int_M f\, dv_g$ is the weighted $f-$volume of $(M,g)$.  This structure suggests to apply variational methods to prescribe the $Q-$curvature and we define
$$\mu_f({\mathcal C}):=\inf_{g\in {\mathcal C}}\frac{\int_M Q_g\, dv_g}{V_f(M,g)^{\frac{n-2k}{n}}}.$$
Given a metric $g\in {\mathcal C}$, the conformal class can be described as
$${\mathcal C}=\{e^{2\omega}g/\, \omega\in C^\infty(M)\}.$$
We assume that $n>2k$: in this context, it is more convenient to write a metric $\hat{g}\in {\mathcal C}$ as $\hat{g}=u^{\frac{4}{n-2k}}g$ with $u\in C^\infty_+(M)$, the set of positive smooth functions. With this parametrization, we have that 
$${\mathcal C}=\{u^{\frac{4}{n-2k}}g/\, u\in  C^\infty_{+}(M)\},$$
and the relation \eqref{eq:conf:e} between $P_g$ and $P_{\hat{g}}$ rewrites
\begin{equation}\label{eq:conf}
P_{\hat{g}}\varphi=u^{1-\crit}P_g(u\varphi)
\end{equation}
for all $\varphi\in C^\infty(M)$, where $\crit:=\frac{2n}{n-2k}$. Therefore, taking $\varphi\equiv 1$, we have that $$P_g u=\frac{n-2k}{2}Q_{\hat{g}}u^{\crit-1}\hbox{ in }M$$
where $\hat{g}=u^{\frac{4}{n-2k}}g$, and then finding a metric in ${\mathcal C}$ with $f$ as $Q-$curvature amounts to solving the variational elliptic equation $P_g u=\frac{n-2k}{2}f u^{\crit-1}$.  Despite this elegant variational structure, this question gives rise to a crucial intrinsic difficulty due to the essence of the problem, that is the conformal invariance of the operator. More precisely, in the spirit of Bourguignon-Ezin \cite{be}, Delano\"e and the author proved in \cite{dr} that 
$$\int_M X(Q_g)\, dv_g=0$$
for all conformal Killing field $X$ on $(M, {\mathcal C})$. When $k=1$, this is the celebrated Kazdan-Warner obstruction \cite{kw} to the scalar curvature problem. In particular, if $\varphi\in C^\infty(\sn)\setminus \{0\}$ is a first eigenfunction of the Laplace-Beltrami operator on the standard sphere $(\sn,h)$, then for any $\epsilon\neq 0$, $Q_h+\epsilon\varphi$ is not achived as the $Q-$curvature of a metric in the conformal class of the standard sphere. Therefore, a function can be arbitrarily close to a $Q-$curvature but not be a $Q-$curvature itself: the prescription of the $Q-$curvature is then a highly unstable problem, and its underlying analysis is intricate. We refer again to \cite{dr} for considerations on the structure of the set of $Q-$curvatures. In the case $k=1$ and $n\geq 3$, the problem of prescribing a constant $Q-$curvature is known as the Yamabe problem: it is not the purpose of the present article to make an extensive historical review of the famous resolution of this problem, and we refer to Lee-Parker \cite{lp} and the references therein. Concerning fourth order problems, that is for $k=2$, there has been an intensive litterature on the question: here, we refer to the recent surveys of Branson-Gover \cite{BrGo}, Chang \cite{chang}, Malchiodi \cite{m} and the references therein.

\medskip\noindent In the sequel, we will say that a function is admissible if it can be achieved as the $Q-$curvature of a metric in a given conformal class. As seen above, some functions on the sphere are not admissible for the standard conformal class. Moser \cite{moser} had the idea to impose invariance under a group of isometries to find admissible functions on the sphere for the scalar curvature problem in 2D. This strategy was also used by Escobar-Schoen \cite{es} and Hebey \cite{hebeysphere} in higher dimensions. In the same spirit, Delano\"e and the author \cite{dr} proved that a function on the sphere which is close to $Q_h$ and invariant under a group of isometries acting without fixed point is admissible. In the present article, we relax the condition of being close to $Q_h$ by imposing cancelation of some derivatives (see Theorem \ref{thm:n} below). In the specific case $n=2k+1$, very few is required; this is the object our main result:

\begin{thm}\label{th:main} Let $k\geq 1$ and let $G$ be a subgroup of isometries of $(\mathbb{S}^{2k+1},h)$. Let $f\in C^\infty(M)$ be a positive $G-$invariant function and assume that $G$ acts without fixed point (that is $|O_G(x)|\geq 2$ for all $x\in \mathbb{S}^{2k+1}$). Then there exists $g\in [h]$ such that $Q_g=f$ and $G\subset Isom_g(\sn)$. 
\end{thm}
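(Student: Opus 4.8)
The plan is to realise the metric as $g=u^{\frac{4}{n-2k}}h$ with $u\in C^\infty_+(\sn)$ a \emph{positive, $G$-invariant} solution of
\begin{equation}\label{eq:plan:eq}
P_h u=\frac{n-2k}{2}\,f\,u^{\crit-1}\qquad\text{on }\sn,
\end{equation}
which, by Proposition~\ref{prop:crit:pt}, is precisely what $Q_g=f$ amounts to; and since $u$ is $G$-invariant the metric $g$ is $G$-invariant, so $G\subset Isom_g(\sn)$. As $n>2k$, the GJMS operator of the round sphere factors as $P_h=\prod_{j=1}^{k}(\Delta_h+c_j)$ with $c_j=(\tfrac{n}{2}+j-1)(\tfrac{n}{2}-j)>0$, so $u\mapsto\int_{\sn}uP_hu\,dv_h$ is coercive and defines an equivalent norm on $H^2_k(\sn)$. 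I would then study the $G$-invariant variational problem
\begin{equation}\label{eq:plan:mu}
\mu_f^G:=\inf\Bigl\{\tfrac{2}{n-2k}\!\int_{\sn}\!uP_hu\,dv_h\ :\ u\in H^2_k(\sn)\ G\text{-invariant},\ \int_{\sn}f|u|^{\crit}\,dv_h=1\Bigr\},
\end{equation}
a nonnegative minimizer of which solves \eqref{eq:plan:eq} up to a positive multiplicative constant removed by scaling.

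The crux is a compactness dichotomy for minimizing sequences of \eqref{eq:plan:mu}. Because $G$ acts without fixed point, $|O_G(p)|\ge 2$ for all $p$, so a $G$-invariant function cannot concentrate at a single point; concentration must occur \emph{simultaneously} along a whole orbit. Carrying out the concentration-compactness analysis of Lions and Struwe, adapted to the polyharmonic-type operator $P_h$ and to the $G$-equivariant setting, the energy defect is carried by finitely many $G$-orbits of atoms; since $x\mapsto x^{2/\crit}$ is strictly concave, the cheapest such configuration is carried by a single orbit, which yields the threshold:
\begin{equation}\label{eq:plan:thr}
\mu_f^G<\mu_{Q_h}([h])\,\inf_{p\in\sn}\frac{|O_G(p)|^{\frac{2k}{n}}}{f(p)^{\frac{n-2k}{n}}}\quad\Longrightarrow\quad\mu_f^G\text{ is attained},
\end{equation}
where $\mu_{Q_h}([h])$ is the value of \eqref{eq:plan:mu} with $f\equiv Q_h$ and $G$ trivial, i.e.\ a dimensional multiple of the sharp $k$-th order Sobolev constant, which on the sphere is attained by the standard bubbles.

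It then remains to prove the strict inequality in \eqref{eq:plan:thr}, and this is where the hypothesis $n=2k+1$ is decisive. Fix $p^\star$ (almost) realising the infimum on the right of \eqref{eq:plan:thr}, with orbit $O_G(p^\star)=\{p_1,\dots,p_m\}$. For a conformal diffeomorphism $\varphi$ of $(\sn,[h])$, since $Q_h$ is a constant, the function $v_\varphi=|\det d\varphi|_h^{\frac{n-2k}{2n}}$ solves $P_h v_\varphi=\frac{n-2k}{2}Q_h v_\varphi^{\crit-1}$ \emph{exactly}; taking $\varphi$ a dilation concentrating at $p_i$ gives the bubble $b_{i,\epsilon}$, and since $G$ permutes the $p_i$ by isometries, $u_\epsilon:=\sum_{i=1}^m b_{i,\epsilon}$ is $G$-invariant. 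Inserting $u_\epsilon$ into \eqref{eq:plan:mu}, using that the $b_{i,\epsilon}$ are \emph{exact} solutions (so that, in contrast with a general manifold, there is no curvature remainder in the numerator) and that $f$ is $G$-invariant hence equals $f(p^\star)$ along the orbit, the leading term of the $\epsilon$-expansion of the Rayleigh quotient is exactly the right-hand side of \eqref{eq:plan:thr}. The first correction has two competing sources: (i) the mutual interaction of the $m$ bubbles, of size $\epsilon^{\,n-2k}$, which --- owing to a cancellation specific to the critical exponent ($\tfrac{n-2k}{n}\crit=2$) between the numerator and the expansion of the denominator --- contributes with a \emph{strictly negative} coefficient, for every $n>2k$; and (ii) the variation of $f$ near $p^\star$, of size $\epsilon^2$ and of \emph{a priori arbitrary sign} (the first moment of the bubble vanishing by symmetry). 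Exactly when $n=2k+1$ one has $n-2k=1<2$, so the favourable term (i) dominates (ii) and $\mu_f^G$ lies strictly below the threshold for small $\epsilon>0$; for $n\ge 2k+2$ the term (ii) is of equal or larger order and one is forced to cancel it by imposing vanishing of certain derivatives of $f$ --- this is the content of Theorem~\ref{thm:n}.

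By \eqref{eq:plan:thr} the infimum $\mu_f^G$ is then attained, by some $u\ge 0$; after scaling $u$ solves \eqref{eq:plan:eq}. Elliptic regularity for this critical polyharmonic-type equation on the closed manifold $\sn$ upgrades $u$ to $C^\infty(\sn)$, and iterating the strong maximum principle through the factorisation $P_h=\prod_{j=1}^k(\Delta_h+c_j)$ with $c_j>0$ (and $f>0$, $u\not\equiv 0$) shows $u>0$ everywhere; then $g=u^{\frac{4}{n-2k}}h\in[h]$ satisfies $Q_g=f$ and $G\subset Isom_g(\sn)$. The main obstacle is the test-function estimate of the previous paragraph: correctly identifying the order ($\epsilon^{n-2k}$ versus $\epsilon^2$) and the sign of the first correction to the Rayleigh quotient --- which is what pins the result to $n=2k+1$ --- together with the equivariant concentration-compactness bookkeeping behind \eqref{eq:plan:thr}.
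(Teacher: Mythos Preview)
Your outline is correct and lands on the same test-function computation as the paper (sums of conformal bubbles along an orbit, the interaction term of order $\epsilon^{\,n-2k}$ with a strictly favourable sign, versus an $f$-variation of order $\epsilon^{2}$, so that $n=2k+1$ is exactly the regime where the former wins). It does, however, take a genuinely different route on two points.

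\emph{First}, the paper does not run concentration--compactness on $H_k^2$ directly: it uses the subcritical scheme (Proposition~\ref{prop:ss:crit}), obtains positive smooth minimizers $u_q$ for $q<\crit$, and proves a quantization dichotomy for their limit (Theorem~\ref{th:quanti}). Your (\ref{eq:plan:thr}) is the content of Proposition~\ref{prop:strict}, reached in the paper via this subcritical blow-up analysis rather than Lions--Struwe on the critical functional.

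\emph{Second, and more substantive}, the paper does not use the bubble-symmetry observation you invoke to kill the linear $f$-term. Instead, Step~9 of Theorem~\ref{th:quanti} is a Pohozaev-type identity showing that if blow-up occurs then $\nabla f(x_\infty)=0$ at the concentration point; for $n=2k+1$ this single vanishing derivative is precisely the hypothesis of Proposition~\ref{prop:test:sphere}, which then yields the strict inequality \emph{at that very point} and contradicts case~(i). Your argument sidesteps Step~9 entirely: you place the test function at a minimiser $p^\star$ of $p\mapsto |O_G(p)|^{2k/n}f(p)^{-2/\crit}$ and use radial symmetry of $u_{p^\star,\beta}$ to get the $O(\epsilon^2)$ error without any assumption on $\nabla f(p^\star)$. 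This is legitimate and slightly shorter for Theorem~\ref{th:main}; what it gives up is the independent interest of the Pohozaev step and of Theorem~\ref{th:quanti} as a general quantization statement on arbitrary $(M,{\mathcal C})$.

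Two small points to tighten. The infimum on the right of your (\ref{eq:plan:thr}) \emph{is} attained, since $|O_G(\cdot)|$ is lower semicontinuous on the compact sphere and $f$ is continuous; so ``almost realising'' is unnecessary and you should take an exact minimiser $p^\star$ (otherwise the margin from the interaction term could be eaten by the slack). And your normalisation of the threshold via $\mu_{Q_h}([h])$ should simply read $\dfrac{2}{n-2k}\,K(n,k)^{-1}$, as in Propositions~\ref{prop:large} and~\ref{prop:strict}.
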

When $k=1,2$, this result is due respectively to Hebey \cite{hebeysphere} and to the author \cite{robertpams}. This theorem is a particular case of more general results proved on arbitrary conformal manifolds (see Proposition \ref{prop:strict} and Theorem \ref{thm:n} below). In this article, we make a general analysis of the operator $P_g$ and of the blow-up phenomenon attached to it on arbitrary conformal manifolds. In the last section, we apply this analysis to the conformal sphere.

\medskip\noindent {\bf Acknowledgement:} This work was initiated when the author was visiting the Technische Universit\"at in Berlin supported by an Elie Cartan followship from the Stiftung Luftbr\"uckendank. It is a pleasure to thank the Differential Geometry team in TU, in particular Mike Scherfner, and the Stiftung for their support and kind hospitality. The author also thanks Andreas Juhl for fruitful discussions on this work.

\section{Miscellaneous on the operator $P_g$}
The operator $P_g$ can be written (partially) as a divergence form (we refer to Branson-Gover \cite{BrGo}):  as a preliminary step, we precise this divergence form that will be useful in the sequel:
\begin{prop}\label{prop:selfadj} Let $P_g$ be the conformal GJMS operator. Then for any $l\in\{0,...,k-1\}$, there exists $A_{(l)}(g)$ a smooth $T_{2l}^0-$tensor field on $M$ such that
\begin{equation}\label{eq:Pg:div}
P_g=\Delta_g^k+\sum_{l=0}^{k-1}(-1)^l \nabla^{j_l...j_1}(A_{(l)}(g)_{i_1...i_lj_1...j_l}\nabla^{i_1...i_l}),
\end{equation}
where the indices are raised via the musical isomorphism. In addition for any $l\in\{0,...,k-1\}$, $A_{(l)}(g)$ is symmetric in the following sense: $A_{(l)}(g)(X,Y)=A_{(l)}(g)(Y,X)$ for all $X,Y$  $T_0^{l}-$tensors on $M$. In particular, we have that
\begin{equation}\label{eq:Pg:adj}
\int_M u P_g(v)\, dv_g=\int_M\left(\Delta_g^{\frac{k}{2}}u\Delta_g^{\frac{k}{2}}v +\sum_{l=0}^{k-1}A_{(l)}(g)(\nabla^l u,\nabla^l v)\right)\, dv_g
\end{equation} 
for all $u, v\in C^\infty(M)$. Here, we have adopted the convention 
$$\Delta_g^{\frac{k}{2}}u\Delta_g^{\frac{k}{2}}v:=(\nabla \Delta_g^{\frac{k-1}{2}}u,\nabla\Delta_g^{\frac{k-1}{2}}v)_g$$
when $k$ is odd.
\end{prop}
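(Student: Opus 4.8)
The starting point is the known fact (Branson--Gover) that $P_g$ admits a partial divergence-form representation; what we must extract carefully is the symmetry of the tensor coefficients and, from it, the symmetric bilinear-form expression \eqref{eq:Pg:adj}. The plan is to proceed in three stages. First, I would recall from \cite{BrGo} that $P_g$ can be written as $\Delta_g^k$ plus a sum of terms each of which is a composition of covariant derivatives with a smooth tensor coefficient, of total order at most $2k-2$, and then reorganize this expression. By repeatedly integrating by parts (commuting covariant derivatives and absorbing the curvature terms that arise into lower-order tensor coefficients), one can put each term into the canonical shape $(-1)^l\nabla^{j_l\dots j_1}\!\big(B_{(l)}(g)_{i_1\dots i_l j_1\dots j_l}\nabla^{i_1\dots i_l}\big)$ for various $l\in\{0,\dots,k-1\}$; grouping by $l$ yields \eqref{eq:Pg:div} with tensors $A_{(l)}(g)$ obtained as finite sums of such $B_{(l)}$'s. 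The combinatorics of this reduction is tedious but elementary: each commutation of a pair of $\nabla$'s costs a curvature term of strictly lower differential order, which by downward induction on the order can be folded into the $A_{(l')}(g)$ with $l'<l$.

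Second, and this is where the real content lies, I would upgrade the coefficient tensors to symmetric ones. The key input is point (iii) in the Introduction: $P_g$ is self-adjoint for the $L^2$ inner product. Write the bilinear form $B(u,v):=\int_M uP_g(v)\,dv_g$; integrating \eqref{eq:Pg:div} by parts $l$ times in the $l$-th summand moves $l$ derivatives onto $u$ and $l$ onto $v$, producing $\int_M\big(\Delta_g^{k/2}u\,\Delta_g^{k/2}v+\sum_l A_{(l)}(g)(\nabla^l u,\nabla^l v)\big)\,dv_g$ but with $A_{(l)}(g)$ only known to be symmetric under the swap of the blocks $(i_1\dots i_l)\leftrightarrow(j_1\dots j_l)$. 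Self-adjointness $B(u,v)=B(v,u)$ forces the symmetric part (under this block swap) of each $A_{(l)}(g)$ to be the only part that contributes, so one may replace $A_{(l)}(g)$ by its symmetrization $\tfrac12\big(A_{(l)}(g)_{i_1\dots i_l j_1\dots j_l}+A_{(l)}(g)_{j_1\dots j_l i_1\dots i_l}\big)$ without changing $P_g$; this is exactly the asserted symmetry $A_{(l)}(g)(X,Y)=A_{(l)}(g)(Y,X)$ for $T_0^l$-tensors $X,Y$. Here one must be slightly careful that symmetrizing the coefficient in \eqref{eq:Pg:div} does not alter the operator: the antisymmetric part, when sandwiched between $\nabla^{j_l\dots j_1}(\cdot)$ and $\nabla^{i_1\dots i_l}$, integrates to zero against every pair $(u,v)$, hence defines the zero operator, so dropping it is legitimate.

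Third, \eqref{eq:Pg:adj} is then immediate: apply the integration by parts described above to the symmetrized form of \eqref{eq:Pg:div}, using that $\Delta_g^k=\Delta_g^{k/2}\circ\Delta_g^{k/2}$ when $k$ is even and $\Delta_g^k=-\mathrm{div}_g\nabla\Delta_g^{(k-1)/2}\circ\Delta_g^{(k-1)/2}$ paired with the stated convention when $k$ is odd, moving half the derivatives onto each slot and picking up no boundary terms since $M$ is closed. The main obstacle is the bookkeeping in the first stage — keeping track of which curvature terms land in which $A_{(l)}(g)$ under iterated commutation of covariant derivatives — rather than any conceptual difficulty; the symmetry statement, by contrast, is a soft consequence of self-adjointness and requires no computation beyond the observation that antisymmetric coefficients yield the zero operator.
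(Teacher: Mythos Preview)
Your overall strategy differs from the paper's, and Stage~2 contains a genuine gap.

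The paper does not import a ready-made divergence form from \cite{BrGo}; it argues entirely from self-adjointness by downward induction on the order. At each step, if the current highest-order term has odd order $q$, the adjoint computation $B^\star=(-1)^qB+(\hbox{lower order})$ together with $P_g=P_g^\star$ forces that term to vanish. If the highest order is $2m$ with coefficient $C$, one replaces $C$ by its block-symmetrization: the antisymmetric remainder acts on $\nabla^{2m}u$, and since permuting covariant derivatives of a function costs only curvature terms, this remainder contributes only \emph{lower-order} terms and is absorbed into the next step of the induction. The symmetric coefficient is then rewritten as $(-1)^m\nabla^{j_m\dots j_1}(A\,\nabla^{i_1\dots i_m})$ plus lower order. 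So symmetry is built in during the construction, and the paper never needs the claim that an antisymmetric coefficient yields the zero operator.

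Your Stage~2 asserts that for each $l$ the operator $(-1)^l\nabla^{j_l\dots j_1}(A_{(l)}^{\mathrm{as}}\nabla^{i_1\dots i_l})$ ``integrates to zero against every pair $(u,v)$, hence defines the zero operator''. This is false: integrating by parts gives $\int_M A_{(l)}^{\mathrm{as}}(\nabla^l v,\nabla^l u)\,dv_g$, which is antisymmetric under $u\leftrightarrow v$, not identically zero; an anti-self-adjoint differential operator need not vanish (take $u\mapsto X(u)$ for a divergence-free vector field $X$). What self-adjointness of $P_g$ actually yields is that the \emph{sum} over $l$ of these operators is simultaneously self-adjoint (being $P_g-\Delta_g^k-\sum_l(-1)^l\nabla^{j}(A_{(l)}^{\mathrm{sym}}\nabla^{i})$, a difference of self-adjoint operators) and anti-self-adjoint, hence zero. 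With this correction your route closes. A smaller point: in Stage~1 you assume the lower-order part already has order at most $2k-2$ and can be reorganized into the shape $(-1)^l\nabla^j(B_{(l)}\nabla^i)$; since that shape has even top order, odd-order pieces must be eliminated somehow, and in the paper this too is done by self-adjointness at each step of the induction.
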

\begin{proof} The proof uses only the self-adjointness of the operator $P_g$. In the sequel, we note $A^\star$ the adjoint of any operator $A$ with respect to the $L^2-$product. As a preliminary, we compute the adjoint of some elementary operators. We adopt here Hamilton's convention \cite{hamilton}: the notation $A\star B$ denotes a linear combination of contraction of the tensors $A$, $B$, $g$ and $g^{-1}$. Given $B$ a smooth $T_q^0-$tensor field on $M$, we consider the operator $Bu:=B\cdot\nabla^qu=B_{i_1...i_q}\nabla^{i_1...i_q}u$ for all $u\in C^\infty(M)$. 

\smallskip\noindent We claim that 
$$B^\star=(-1)^qB+\sum_{l=1}^{q-1} \nabla^{l}u\star \nabla^{q-l}B.$$
We prove the claim. We let $u, v\in C^\infty(M)$ be two smooth functions on $M$. Integrating by parts, we have that
\begin{eqnarray*}
\int_M u Bv\, dv_g &=& \int_M u B_{i_1...i_q}\nabla^{i_1...i_q}v\, dv_g=(-1)^q\int_M \nabla^{i_q...i_1}(u B_{i_1...i_q})v\, dv_g\\
&=& (-1)^q\int_M \left(B_{i_1...i_q}\nabla^{i_q...i_1} u +\sum_{l=0}^{q-1} \nabla^{l}u\star \nabla^{q-l}B\right)v\, dv_g.
\end{eqnarray*}
Therefore, $B^\star$ is defined and
$$B^\star u=(-1)^qB_{i_1...i_q}\nabla^{i_q...i_1} u +\sum_{l=0}^{q-1} \nabla^{l}u\star \nabla^{q-l}B.$$
For any smooth tensor field $T$, we define $Asym(T)(X,Y,...):=T(X,Y,...)-T(Y,X,...)$. It follows from the definition of the curvature tensor that 
\begin{equation*}
Asym(\nabla^2T)=T\star R,
\end{equation*}
where $R$ is the curvature tensor. Therefore, for any permutation $\sigma$ of $\{1,...,q\}$, we have that
\begin{equation}\label{action:sigma}
\nabla^q u-\sigma\cdot\nabla^q u=\nabla^{q-2}u\star R,
\end{equation}
where $\sigma\cdot T$ permutes the variables of the covariant tensor $T$ along $\sigma$. Therefore,  we have that $\nabla^{i_q...i_1}u-\nabla^{i_1...i_q}u$ is a contraction of $\nabla^{q-2}u$, and therefore we get that $B^\star=(-1)^q B+lot$. This proves the claim.

\medskip\noindent We are now in position to prove Proposition \ref{prop:selfadj}. It follows from the definition of $P_g$ that there exists $B$, a smooth $T_{2k-1}^0-$tensor field on $M$, such that
$P_gu=\Delta_g^k u +B u+lot$
for all $u\in C^\infty(M)$. Since $P_g$ and $\Delta_g$ are self-adjoint, we then get that
$$P_g=P_g^\star=\Delta_g^k+B^\star+lot=\Delta_g^k-B+lot$$
since $2k-1$ is odd. In particular, $Bu=lot$ and therefore, $Bu=0$ for all $u\in C^\infty(M)$. 

\medskip\noindent We now take $C$ a smooth $(2k-2,0)-$tensor field such that $P_g=\Delta_g^k +C\cdot\nabla^{2k-2}+lot$. We define $A$ as the symmetrized tensor of $C$, that is via coordinates $A(X,Y)=(-1)^{k-1}\frac{1}{2}(C(X,Y)+C(Y,X))$ for all $X,Y$ any $T_0^{k-1}-$tensors on $M$. As easily checked, since changing the order of differentiation involves only lower order terms via with \eqref{action:sigma}, we have that
\begin{eqnarray*}
C\cdot\nabla^{2k-2}u&=&C_{i_1...i_{k-1}j_1...j_{k-1}}\nabla^{i_1...i_{k-1}j_1...j_{k_1}}u\\
&=& (-1)^{k-1}A_{i_1...i_{k-1}j_1...j_{k-1}}\nabla^{i_1...i_{k-1}j_1...j_{k-1}}u+\nabla^{2k-4}u\star R\\
&=& (-1)^{k-1}A_{i_1...i_{k-1}j_1...j_{k-1}}\nabla^{j_{k-1}...j_{1}i_1...i_{k-1}}u+\nabla^{2k-4}u\star R\\
&=& (-1)^{k-1}\nabla^{j_{k-1}...j_{1}}\left(A_{i_1...i_{k-1}j_1...j_{k-1}}\nabla^{i_1...i_{k-1}}u\right)\\
&&+\nabla^{2k-4}u\star R+\sum_{l=1}^{k-1} \nabla^{2k-2-l}u\star\nabla^{l}A
\end{eqnarray*}
and then 
$$P_g=\Delta_g^k+(-1)^{k-1}\nabla^{j_{k-1}...j_{1}}\left(A_{i_1...i_{k-1}j_1...j_{k-1}}\nabla^{i_1...i_{k-1}}\right)+lot.$$
Iterating these steps yields \eqref{eq:Pg:div}. Integrating by parts then yields \eqref{eq:Pg:adj}.\end{proof}

\medskip\noindent Define the norm $\Vert u\Vert_{H_k^2}:=\sum_{l=0}^k\Vert\nabla^l u\Vert_2$ and the space $H_k^2(M)$ as the completion of $C^\infty(M)$ for the norm $\Vert\cdot\Vert_{H_k^2}$. As a consequence of \eqref{eq:Pg:adj}, we get that the bilinear form $(u, v)\mapsto \int_M u P_gv\, dv_g$ extends to a continuous symmetrical bilinear form on $H_k^2(M)\times H_k^2(M)$. We say that $P_g$ is coercive if there exists $c>0$ such that
$$\int_M uP_g u\, dv_g\geq c\Vert u\Vert_2^2\hbox{ for all }u\in H_k^2(M).$$
We then define the norm $\Vert u\Vert_{P_g}:=\sqrt{\int_M uP_g u\, dv_g}$ for all $u\in H_k^2(M)$.

\begin{prop}\label{prop:norm} Assume that $P_g$ is coercive. Then $\Vert\cdot\Vert_{P_g}$ is a norm on $H_k^2$ equivalent to $\Vert\cdot\Vert_{H_k^2}$. 
\end{prop}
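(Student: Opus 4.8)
The plan is to show the two norms $\|\cdot\|_{P_g}$ and $\|\cdot\|_{H_k^2}$ are equivalent on $H_k^2(M)$, i.e.\ to produce constants $c_1, c_2 > 0$ with $c_1 \|u\|_{H_k^2}^2 \le \|u\|_{P_g}^2 \le c_2 \|u\|_{H_k^2}^2$ for all $u \in H_k^2(M)$. The upper bound is the easy direction: starting from the representation \eqref{eq:Pg:adj}, one bounds $\int_M \Delta_g^{k/2} u\, \Delta_g^{k/2} v\, dv_g$ (with $v=u$) by $\|\nabla^k u\|_2^2$ up to a constant and lower-order terms, and each $A_{(l)}(g)(\nabla^l u, \nabla^l u)$ by $\|A_{(l)}(g)\|_\infty \|\nabla^l u\|_2^2$, using compactness of $M$ to get the sup norms finite; summing gives $\|u\|_{P_g}^2 \le C \sum_{l=0}^k \|\nabla^l u\|_2^2$, which after the elementary inequality $(\sum a_l)^2 \le (k+1)\sum a_l^2$ is exactly $\le c_2 \|u\|_{H_k^2}^2$. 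Density of $C^\infty(M)$ then extends the inequality from smooth functions to all of $H_k^2(M)$.

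For the lower bound, I would argue by contradiction. Suppose no such $c_1$ exists; then there is a sequence $u_m \in H_k^2(M)$ with $\|u_m\|_{H_k^2} = 1$ and $\|u_m\|_{P_g}^2 \to 0$. By reflexivity of $H_k^2(M)$, after passing to a subsequence $u_m \rightharpoonup u$ weakly in $H_k^2(M)$, and by the Rellich--Kondrachov theorem $u_m \to u$ strongly in $H_{k-1}^2(M)$ (in particular strongly in $L^2$ and in each $\|\nabla^l \cdot\|_2$ for $l \le k-1$). Now I use \eqref{eq:Pg:adj} with $v = u = u_m$: the top-order term is $\|\nabla^k u_m\|_2^2$ (modulo curvature lower-order terms, which converge since the lower derivatives converge strongly) and the remaining terms $\sum_{l=0}^{k-1} A_{(l)}(g)(\nabla^l u_m, \nabla^l u_m)$ converge to $\sum_{l=0}^{k-1} A_{(l)}(g)(\nabla^l u, \nabla^l u)$. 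Since $\|u_m\|_{P_g}^2 \to 0$, this forces $\|\nabla^k u_m\|_2$ to converge, hence $u_m \to u$ strongly in $H_k^2(M)$; in particular $\|u\|_{H_k^2} = 1$, so $u \ne 0$. But then $\|u\|_{P_g}^2 = \lim \|u_m\|_{P_g}^2 = 0$, contradicting coercivity, which gives $\|u\|_{P_g}^2 \ge c\|u\|_2^2 > 0$.

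The main obstacle, and the step requiring care, is justifying that $\|u_m\|_{P_g}^2 \to 0$ together with strong $H_{k-1}^2$-convergence forces strong $H_k^2$-convergence — that is, controlling the top-order term $\|\nabla^k u_m\|_2^2$. The point is that all contributions to $\int_M u_m P_g u_m\, dv_g$ other than $\|\nabla^k u_m\|_2^2$ involve at most $k-1$ derivatives (after commuting covariant derivatives via \eqref{action:sigma}, the curvature corrections are genuinely lower order), so they pass to the limit along the strongly convergent lower-order derivatives; since the full sum tends to $0$, the top term must converge, and its limit is $-\sum_{l=0}^{k-1} A_{(l)}(g)(\nabla^l u, \nabla^l u)$ plus curvature terms, which is finite, giving boundedness and then, by weak lower semicontinuity combined with the convergence of all norms, strong convergence. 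One should also note at the outset that the coercivity hypothesis already guarantees $\|\cdot\|_{P_g}$ is positive-definite, hence a genuine norm, which is the other assertion of the proposition.
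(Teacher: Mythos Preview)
Your overall strategy --- contradiction via a normalized sequence, compact embedding $H_k^2\hookrightarrow H_{k-1}^2$, then a comparison of the top-order terms --- is the same as the paper's. But the single sentence ``the top-order term is $\|\nabla^k u_m\|_2^2$ (modulo curvature lower-order terms)'' is exactly the non-trivial content of the proof, and you have asserted it rather than proved it. In \eqref{eq:Pg:adj} the leading term is $\int_M(\Delta_g^{k/2}u_m)^2\,dv_g$, not $\int_M|\nabla^k u_m|^2\,dv_g$; the passage from one to the other is a Bochner--Lichnerowicz--Weitzenb\"ock identity that requires an iterated integration by parts together with \eqref{action:sigma}, and this is precisely what the paper carries out in detail to obtain $\int_M|\nabla^k u_i|^2\,dv_g=\int_M(\Delta_g^{k/2}u_i)^2\,dv_g+o(1)$. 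Invoking \eqref{action:sigma} alone does not do it: commuting indices in $\nabla^k u$ produces lower-order terms, but you still need to contract down to iterated Laplacians, and that is a separate (if routine) computation.

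There is also a logical wrinkle in your contradiction. You want to deduce strong $H_k^2$-convergence, then $\|u\|_{H_k^2}=1$, then $\|u\|_{P_g}=0$, contradicting coercivity. But convergence of $\|\nabla^k u_m\|_2^2$ to \emph{some} limit, together with weak convergence, does not by itself give strong convergence: you need that limit to equal $\|\nabla^k u\|_2^2$. If you chase this through, weak lower semicontinuity forces $\|u\|_{P_g}^2\le 0$ (hence $u=0$ by coercivity) \emph{before} you can conclude strong convergence --- so the contradiction really comes from $u=0$ together with $\|u_m\|_{H_k^2}=1$, which is exactly the paper's route. The paper's ordering is cleaner: first use coercivity to get $\|u_m\|_2\to 0$, hence $u\equiv 0$; then the Bochner formula collapses to $\int|\nabla^k u_m|^2=\int(\Delta_g^{k/2}u_m)^2+o(1)\to 0$, contradicting the normalization.
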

\begin{proof} Clearly $\Vert\cdot\Vert_{P_g}$ is a norm and there exists $C>0$ such that $\Vert\cdot\Vert_{P_g}\leq C\Vert\cdot\Vert_{H_k^2}$. We now argue by contradiction and we assume that the two norms are not equivalent: then there exists $(u_i)_{i\in\nn}\in H_k^2(M)$ such that
\begin{equation}\label{hyp:ui}
\Vert u_i\Vert_{H_k^2}=1\hbox{ and }\Vert u_i\Vert_{P_g}=o(1)
\end{equation}
when $i\to +\infty$. Up to a subsequence, still denoted as $(u_i)$, there exists $u\in H_k^2(M)$ such that $u_i\rightharpoonup u$ weakly in $H_k^2(M)$ and $u_i\to u$ strongly in $H_{k-1}^2(M)$ when $i\to +\infty$. The coercivity of $P_g$ yields $\Vert u_i\Vert_2=o(1)$ when $i\to +\infty$, and then $u\equiv 0$. Therefore, we have that
\begin{equation}\label{lim:ui}
u_i\rightharpoonup 0\hbox{ weakly in }H_k^2(M)\hbox{ and }u_i\to 0\hbox{ strongly in }H_{k-1}^2(M)
\end{equation}
when $i\to +\infty$. Consequently, \eqref{hyp:ui} rewrites
\begin{equation}\label{hyp:ui:0}
\lim_{i\to +\infty}\int_M|\nabla^k u_i|_g^2\, dv_g=1\hbox{ and }\lim_{i\to +\infty}\int_M(\Delta_g^{\frac{k}{2}}u_i)^2\, dv_g=0.
\end{equation}
The contradiction comes from a Bochner-Lichnerowicz-Weitzenbock type formula. Here again, we use \eqref{action:sigma}. We fix $u, v\in C^\infty(M)$: we have that (the notation $a\equiv b$ means that the terms are equal up to a divergence)
\begin{eqnarray*}
(\nabla^k u,\nabla^k v)_g&\equiv& g^{\alpha_1\beta_1}...g^{\alpha_k\beta_k}\nabla_{\alpha_1...\alpha_k}u\nabla_{\beta_1...\beta_k}v\\
&\equiv& -g^{\alpha_1\beta_1}...g^{\alpha_k\beta_k}\nabla_{\beta_1\alpha_1...\alpha_k}u\nabla_{\beta_2...\beta_k}v\\
&\equiv& -g^{\alpha_1\beta_1}...g^{\alpha_k\beta_k}\nabla_{\alpha_2...\alpha_k\beta_1\alpha_1}u\nabla_{\beta_2...\beta_k}v+\nabla^{k-1}u\star \nabla^{k-1}v\star R\\
&\equiv& -g^{\alpha_2\beta_2}...g^{\alpha_k\beta_k}\nabla_{\alpha_2...\alpha_k}g^{\alpha_1\beta_1}\nabla_{\beta_1\alpha_1}u\nabla_{\beta_2...\beta_k}v+\nabla^{k-1}u\star\nabla^{k-1}v\\
&\equiv& g^{\alpha_2\beta_2}...g^{\alpha_k\beta_k}\nabla_{\alpha_2...\alpha_k}\Delta_g u\nabla_{\beta_2...\beta_k}v+\nabla^{k-1}u\star\nabla^{k-1}v\star R\\
&\equiv& (\nabla^{k-1} \Delta_g u,\nabla^{k-1}v)_g+\nabla^{k-1}u\star\nabla^{k-1}v\star R.
\end{eqnarray*}
the same procedure applied to $(\nabla^{k-1}v,\nabla^{k-1}\Delta_g u)_g$ yields
\begin{eqnarray*}
(\nabla^k u,\nabla^k v)_g&\equiv& (\nabla^{k-2} \Delta_g u,\nabla^{k-2}\Delta_g v)_g\\
&&+\nabla^{k-1}u\star\nabla^{k-1}v\star R
+\nabla^{k-2}\Delta_g u\star\nabla^{k-2}v\star R.
\end{eqnarray*}
Taking $u=v=u_i$, integrating over $M$ and using \eqref{lim:ui} yields
$$\int_M |\nabla^k u_i|_g^2\, dv_g=\int_M |\nabla^{k-2}\Delta_g u_i|_g^2\, dv_g+o(1)$$
when $i\to +\infty$. Iterating this process and considering separately the cases $k$ odd and $k$ even, we get that 
$$\int_M |\nabla^k u_i|_g^2\, dv_g=\int_M (\Delta_g^{\frac{k}{2}} u_i)^2\, dv_g+o(1)$$
when $i\to +\infty$. This is a contradiction with \eqref{hyp:ui:0} and Proposition \ref{prop:norm} is proved.\end{proof}

\section{General considerations on the equivariant Yamabe invariant}
We let $(M,{\mathcal C})$ be a conformal Riemannian manifold. We let $G\subset Diff(M)$ be a subgroup of diffeomorphisms of $M$. We define
$${\mathcal C}_G:=\{g\in {\mathcal C}/\, G\subset Isom_g(M)\},$$
and we assume that ${\mathcal C}_G\neq\emptyset$. As easily checked, for any $g\in {\mathcal C}_G$, we have that
$${\mathcal C}_G=\{e^{2\omega}g/\, \omega\in C^\infty_G(M)\}$$
where $C^\infty_G(\Omega)=\{\omega\in C^\infty(M)/\, \omega\circ\sigma=\omega\hbox{ for all }\sigma\in G\}$ is the set of $G-$invariant smooth functions on $M$. We assume that $n>2k$: in this context, it is more convenient to write a metric $\hat{g}\in {\mathcal C}$ as $\hat{g}=u^{\frac{4}{n-2k}}g$ with $u\in C^\infty_+(M)$. The relation between $P_g$ and $ P_{\hat{g}}$ is given by \eqref{eq:conf}. With the new parametrization, we have that 
$${\mathcal C}_G=\{u^{\frac{4}{n-2k}}g/\, u\in  C^\infty_{G,+}(M)\},$$
where $C_{G,+}^\infty(M):=\{u\in C_G^\infty(M)/\; u>0\}$. Let $f\in C_{G,+}^\infty(M)$ be a smooth positive $G-$invariant function. By analogy with the Yamabe invariant, we define
\begin{equation*}
\mu_f({\mathcal C}_G):=\inf_{g\in {\mathcal C}_G}\frac{\int_M Q_g\, dv_g}{V_f(M,g)^{\frac{2}{\crit}}}
\end{equation*}
where $V_f(M,g)$ is the $f-$volume defined in the introduction and $\crit:=\frac{2n}{n-2k}$. We fix $g\in {\mathcal C}_G$: as easily checked, we have that
\begin{equation*}
\mu_f({\mathcal C}_G)=\frac{2}{n-2k}\inf_{u\in C^\infty_{G,+}(M)}I_g(u)
\end{equation*}
where 
$$I_g(u):=\frac{\int_M u P_g u\, dv_g}{\left(\int_M f |u|^{\crit}\, dv_g\right)^{\frac{2}{\crit}}}$$
for all $u\in H_k^2(M)\setminus\{0\}$.

\begin{prop}\label{prop:crit:pt} A metric $g\in{\mathcal C}_G$ is a critical point of the functional $g\mapsto \frac{\int_M Q_g\, dv_g}{V_f(M,g)^{\frac{2}{\crit}}}$ if and only if there exists $\lambda\in\rr$ such that $Q_g=\lambda f$.
\end{prop}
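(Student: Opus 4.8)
\medskip\noindent\emph{Proposal of proof.} The plan is to transport the geometric functional on ${\mathcal C}_G$ to the analytic functional $I_g$ and to read off its Euler--Lagrange equation. First I would fix $g\in{\mathcal C}_G$ and write the metrics of ${\mathcal C}_G$ as $\hat g=u^{\frac{4}{n-2k}}g$ with $u\in C^\infty_{G,+}(M)$. Since $dv_{\hat g}=u^{\crit}\,dv_g$ and, taking $\varphi\equiv1$ in \eqref{eq:conf}, $P_gu=\frac{n-2k}{2}Q_{\hat g}u^{\crit-1}$, one gets $\int_M Q_{\hat g}\,dv_{\hat g}=\frac{2}{n-2k}\int_M uP_gu\,dv_g$ and $V_f(M,\hat g)=\int_M f u^{\crit}\,dv_g$, so the functional in the statement evaluated at $\hat g$ is exactly $\frac{2}{n-2k}I_g(u)$. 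Consequently $\hat g=u^{\frac{4}{n-2k}}g$ is a critical point of the functional (for variations $u+t\varphi$ within ${\mathcal C}_G$, i.e. $\varphi\in C^\infty_G(M)$) if and only if $u$ is a critical point of $I_g$ restricted to $C^\infty_{G,+}(M)$.

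\medskip\noindent Next I would differentiate $I_g$. Setting $A(u)=\int_M uP_gu\,dv_g$ and $B(u)=\int_M f|u|^{\crit}\,dv_g$, and using the self-adjointness of $P_g$ (Proposition \ref{prop:selfadj}), a direct computation for $u>0$ and $\varphi\in C^\infty_G(M)$ gives
$$I_g'(u)[\varphi]=\frac{2}{B(u)^{2/\crit}}\left(\int_M\varphi\,P_gu\,dv_g-\frac{A(u)}{B(u)}\int_M f\,u^{\crit-1}\varphi\,dv_g\right),$$
so $u$ is critical precisely when the smooth function $h:=P_gu-\frac{A(u)}{B(u)}\,f\,u^{\crit-1}$ is $L^2(M,g)$-orthogonal to every $G$-invariant function. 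The key observation will be that $h$ is itself $G$-invariant: this follows from the $G$-invariance of $u$ and $f$ together with the naturality of $P_g$ (point (ii) of the Introduction), which gives $\sigma^\star P_g=P_{\sigma^\star g}=P_g$ for all $\sigma\in G\subset Isom_g(M)$. Hence $h$ is an admissible test function, and the choice $\varphi=h$ forces $h\equiv0$, that is $P_gu=\frac{A(u)}{B(u)}\,f\,u^{\crit-1}$; dividing by $u^{\crit-1}>0$ and using $P_gu=\frac{n-2k}{2}Q_{\hat g}u^{\crit-1}$ yields $Q_{\hat g}=\lambda f$ with $\lambda=\frac{2}{n-2k}\cdot\frac{A(u)}{B(u)}$.

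\medskip\noindent For the converse, if $Q_{\hat g}=\lambda f$ then $P_gu=\frac{n-2k}{2}\lambda\,f\,u^{\crit-1}$; pairing with $u$ gives $A(u)=\frac{n-2k}{2}\lambda\,B(u)$, hence $\frac{A(u)}{B(u)}=\frac{n-2k}{2}\lambda$, and the displayed first-variation formula shows $I_g'(u)\equiv0$, i.e. $\hat g$ is critical. I expect the only genuinely delicate point to be the passage from ``$h$ orthogonal to all $G$-invariant test functions'' to ``$h\equiv0$'', which is painless here precisely because $h$ is smooth and $G$-invariant; the rest — the exponent bookkeeping $\crit=\frac{2n}{n-2k}$, $\frac{2}{\crit}=\frac{n-2k}{n}$ and the differentiation of $u\mapsto B(u)^{2/\crit}$ — is routine.
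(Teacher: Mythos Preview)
Your proof is correct and follows essentially the same approach as the paper: compute the first variation, observe that the resulting Euler--Lagrange expression is itself $G$-invariant, and use this to pass from $G$-invariant test functions to the full conclusion. The only cosmetic difference is that the paper works directly at the metric level (taking $u(0)=1$ and $\dot u\in C^\infty_G(M)$) and phrases the key step via Haar-measure symmetrization of an arbitrary $v\in C^\infty(M)$, whereas you transport to $I_g$ and plug $\varphi=h$ directly; these are equivalent.
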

\begin{proof} We fix $g\in {\mathcal C}_G$ and $t\mapsto g(t)\in {\mathcal C}_G$ a differentiable family of metrics conformal to $g$ such that $g(0)=g$. In particular, there exists a differentiable family $t\mapsto u(t)\in C^\infty_{G, +}(M)$ such that $g(t)=u(t)^{\frac{4}{n-2k}}g$ and $u(0)=1$. We define $\dot{u}:=u'(0)$. Using the self-adjointness of $P_g$, straightforward computations yield
$$\frac{d}{dt}\left(\frac{\int_M Q_{g(t)}\, dv_{g(t)}}{V_f(M,g(t))^{\frac{2}{\crit}}}\right)_{t=0}=2\frac{\int_M \dot{u}\left(Q_g-f \bar{Q}_g^f\right)\, dv_g}{V_f(M, g(t))^{\frac{2}{\crit}}}$$
where
$$\bar{Q}_g^f=\frac{\int_M Q_g\, dv_g}{V_f(M,g)}.$$
Since $u$ is $G-$invariant, the function $\dot{u}$ ranges $C^\infty_G(M)$. Fix $v\in C^\infty(M)$ and let $v_G$ be its symmetrization via the Haar measure. We then define $u(t):=1+tv_G$ for all $t\in\rr$: since $f$ and $Q_g$ are $G-$invariant (this is a consequence of point (ii) of the characterization of $P_g$ and of the definition of $Q_g$), we get that
$$\int_M \dot{u}\left(Q_g-f \bar{Q}_g^f\right)\, dv_g=\int_M v_G\left(Q_g-f \bar{Q}_g^f\right)\, dv_g=\int_M v\left(Q_g-f \bar{Q}_g^f\right)\, dv_g.$$
Therefore, $g$ is a critical point if and only if $Q_g=f \bar{Q}_g^f$. This proves Proposition \ref{prop:crit:pt}.\end{proof}

\medskip\noindent To carry out the analysis, coercivity and positivity preserving property are required. More precisely, we assume that there exists $g\in {\mathcal C}$ such that

$$\left\{\begin{array}{cl}
(C) & \hbox{ the operator }P_g\hbox{ is coercive}\\
(PPP) & \hbox{ for any }u\in C^\infty(M)\hbox{ such that }P_g\geq 0\hbox{ then either }u>0\hbox{ or }u\equiv 0
\end{array}\right\}.$$
\smallskip\noindent Note that $(C)$ and $(PPP)$ are conformally invariant: they hold for some $g\in{\mathcal C}$ iff they hold for all $g\in{\mathcal C}$. 

\begin{prop}\label{prop:ppp} Assume that the metric $g$ is Einstein with positive scalar curvature and $n>2k$, then $P_g$ satisfies $(C)$ and $(PPP)$.
\end{prop}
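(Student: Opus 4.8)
The plan is to exploit the classical factorization of the GJMS operator on an Einstein manifold into a product of second-order operators, and to deduce $(C)$ and $(PPP)$ from the corresponding elementary properties of each factor.

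First, I would recall that if $g$ is Einstein, say $\mathrm{Ric}_g=(n-1)\lambda g$ with $\lambda>0$ (which holds since $\mathrm{Scal}_g>0$), then
$$P_g=\prod_{j=1}^{k}\left(\Delta_g+c_j\right),\qquad c_j:=\lambda\left(\frac{n}{2}+j-1\right)\left(\frac{n}{2}-j\right),$$
a fact that can be established from the explicit form of the Fefferman--Graham ambient metric over an Einstein base (see \cite{fg1,fg2}, Juhl \cite{juhlbook} and Branson--Gover \cite{BrGo}; for the round sphere it is due to Branson \cite{br1,br2}). The normalization of the $c_j$'s is forced by the case $k=1$, where $P_g$ is the conformal Laplacian $\Delta_g+\frac{n-2}{4(n-1)}\mathrm{Scal}_g$ and $c_1=\lambda\frac{n}{2}\cdot\frac{n-2}{2}$. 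Since $n>2k$ we have $\frac{n}{2}>k\geq j$ for every $j\in\{1,\dots,k\}$, hence $\frac{n}{2}-j>0$ and $\frac{n}{2}+j-1>0$, so that $c_j>0$ for all $j$. Moreover the factors $L_j:=\Delta_g+c_j$ commute, being polynomials in $\Delta_g$.

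For the coercivity $(C)$, I would diagonalize $\Delta_g$: let $0=\mu_0<\mu_1\leq\mu_2\leq\dots$ denote its eigenvalues and $(\phi_i)_i$ an associated $L^2(M,g)$-orthonormal basis of eigenfunctions. Writing $u=\sum_i a_i\phi_i\in C^\infty(M)$, one gets
$$\int_M uP_gu\, dv_g=\sum_i\left(\prod_{j=1}^{k}(\mu_i+c_j)\right)a_i^2\geq\left(\prod_{j=1}^{k}c_j\right)\sum_i a_i^2=\left(\prod_{j=1}^{k}c_j\right)\Vert u\Vert_2^2,$$
since $t\mapsto\prod_j(t+c_j)$ is nondecreasing on $[0,+\infty)$ and all $c_j>0$; as $\prod_jc_j>0$ and $C^\infty(M)$ is dense in $H_k^2(M)$, this is exactly coercivity. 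The same spectral computation shows that each partial product $L_j\circ\dots\circ L_k$ has only positive eigenvalues, hence trivial kernel on $C^\infty(M)$.

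Finally, for the positivity-preserving property $(PPP)$ I would argue by induction on the number of factors, the basic observation being that each $L_j=\Delta_g+c_j$ with $c_j>0$ is positivity-preserving in the strong sense: if $L_jv\geq0$ on $M$, then at an interior minimum point $x_0$ of $v$ one has $\Delta_gv(x_0)\leq0$, so $0\leq L_jv(x_0)\leq c_jv(x_0)$, whence $v\geq0$, and the strong maximum principle gives $v>0$ or $v\equiv0$. Now assume $P_gu\geq0$ and set $v:=(L_2\circ\dots\circ L_k)u$ (the order is immaterial since the $L_j$ commute); then $L_1v=P_gu\geq0$, so $v>0$ or $v\equiv0$. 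If $v\equiv0$, then $u$ lies in the kernel of $L_2\circ\dots\circ L_k$, which is trivial, so $u\equiv0$. If $v>0$, I repeat the argument with one fewer factor, and after finitely many steps I reach $L_ku\geq0$, hence $u>0$ or $u\equiv0$; this proves $(PPP)$. The only non-elementary input is the Einstein factorization of $P_g$; granted that, both assertions reduce to the maximum principle and a one-line spectral estimate, so I expect the bookkeeping of the constants $c_j$ — and the verification that $n>2k$ makes them positive — to be the only point requiring genuine care.
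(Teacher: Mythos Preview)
Your proof is correct and follows essentially the same route as the paper: both rely on the explicit Einstein factorization $P_g=\prod_{j=1}^{k}(\Delta_g+c_j)$ with $c_j>0$ (for $n>2k$), deduce $(PPP)$ by iterating the second-order comparison/maximum principle through the factors, and obtain $(C)$ from the spectral observation that the first eigenvalue of $S(\Delta_g)$ equals $S(0)=\prod_j c_j>0$. Your write-up is in fact more explicit than the paper's sketch (you give the constants $c_j$ and check their sign, and you spell out the induction for $(PPP)$, including the harmless kernel case), but the strategy is the same.
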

\begin{proof} This relies essentially on the the explicit expression of the GJMS operator in the Einstein case: see Proposition 7.9 of Fefferman-Graham \cite{fg2} and also Gover \cite{gover} for a proof via tractors. Indeed, for an Einstein metric $g$, $P_g$ expresses as an explicit product of second-order operators with constant coefficients depending only on the scalar curvature. For positive curvature, a direct consequence is that $P_g$ satisfies $(PPP)$ by $k$ applications of the second-order comparison principle.  Moreover, still in this case, since $P_g=S(\Delta_g)$ with $S$ a polynomial with positive constant coefficients, it follows from Hebey-Robert \cite{hr} that the first eigenvalue of $P_g$ is $S(0)>0$ ($0$ is the first eigenvalue of $\Delta_g$), and then $P_g$ satisfies $(C)$.\end{proof}

\medskip\noindent Due to the lack of compactness of the embedding $H_k^2(M)\hookrightarrow L^{\crit}(M)$, it is standard to use the subcritical method. Given $q\in (2,\crit]$, we define
$$I_{g,q}(u):=\frac{\int_M u P_g u\, dv_g}{\left(\int_M f |u|^{q}\, dv_g\right)^{\frac{2}{q}}}$$
for all $u\in H_k^2(M)\setminus\{0\}$, and
$$\mu_q:=\inf_{u\in H_{k,G}^2(M)\setminus\{0\}}I_{g,q}(u),$$
where $H_{k,G}^2(M):=\{u\in H_k^2(M)/\, u\circ\sigma=u\hbox{ a.e. for all }\sigma\in G\}$. The first result is that $\mu_q$ is achieved at a smooth positive minimizer when $q<\crit$:

\begin{prop}\label{prop:ss:crit} We fix $q\in (2,\crit)$, we assume that $(C)$ and $(PPP)$ hold and that ${\mathcal C}_G\neq\emptyset$. Then $\mu_q>0$ is achieved. Moreover, there exists $u_q\in C^\infty_{G,+}(M)$ a smooth positive function such that $\mu_q=I_{g,q}(u_q)$ and
\begin{equation}\label{eq:uq}
P_g u_q=\mu_q f u_q^{q-1}\hbox{ in }M\hbox{ with }\int_M f u_q^q\, dv_g=1.
\end{equation}
\end{prop}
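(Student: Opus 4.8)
The plan is to run the direct method of the calculus of variations in the subcritical range $q<\crit$ — where the Sobolev embedding $H_k^2(M)\hookrightarrow L^q(M)$ is compact — and then to upgrade the resulting minimizer to a positive smooth $G$-invariant solution by means of elliptic regularity and the positivity preserving property $(PPP)$. I would first record that $0<\mu_q<+\infty$: finiteness follows by evaluating $I_{g,q}$ on the constant function $1\in H_{k,G}^2(M)$ (for which $\int_M P_g1\,dv_g=\Vert 1\Vert_{P_g}^2$ is finite and $\int_M f\,dv_g>0$), while $(C)$ together with Proposition~\ref{prop:norm} yields $c>0$ with $\Vert u\Vert_{P_g}^2\geq c\Vert u\Vert_{H_k^2}^2$, and combining this with the continuous embedding $H_k^2(M)\hookrightarrow L^q(M)$ (legitimate since $q\leq\crit$) and $\int_M f|u|^q\,dv_g\leq\Vert f\Vert_\infty\Vert u\Vert_q^q$ gives $I_{g,q}(u)\geq c'>0$ for all $u\neq0$, hence $\mu_q>0$.

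\medskip\noindent Next I would produce a minimizer. Take a minimizing sequence $(u_i)\subset H_{k,G}^2(M)$ normalized by $\int_M f|u_i|^q\,dv_g=1$; then $\Vert u_i\Vert_{P_g}^2\to\mu_q$ and, by Proposition~\ref{prop:norm}, $(u_i)$ is bounded in $H_k^2(M)$. Up to a subsequence, $u_i\rightharpoonup u_q$ weakly in $H_k^2(M)$ and — here the hypothesis $q<\crit$ is essential — $u_i\to u_q$ strongly in $L^q(M)$ by the Rellich--Kondrachov theorem. Strong $L^q$-convergence forces $\int_M f|u_q|^q\,dv_g=1$, so $u_q\not\equiv0$; weak lower semicontinuity of the convex continuous functional $u\mapsto\Vert u\Vert_{P_g}^2$ gives $\int_M u_qP_gu_q\,dv_g\leq\mu_q$; and $u_q\in H_{k,G}^2(M)$ since this subspace is closed, hence weakly closed. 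Therefore $u_q$ is a minimizer and $\int_M u_qP_gu_q\,dv_g=\mu_q$. Writing the Euler--Lagrange equation — using the self-adjointness of $P_g$ and symmetrizing arbitrary test functions over $G$ exactly as in the proof of Proposition~\ref{prop:crit:pt}, which is licit because $P_gu_q-\mu_q f|u_q|^{q-2}u_q$ is $G$-invariant — shows that $u_q$ is a weak solution of $P_gu_q=\mu_q f|u_q|^{q-2}u_q$. Since $q-1<\crit-1=\frac{n+2k}{n-2k}$, a standard bootstrap using $L^p$-estimates for the $2k$-th order elliptic operator $P_g=\Delta_g^k+lot$ raises the integrability of $u_q$ in finitely many steps and yields $u_q\in C^{2k,\alpha}(M)$; full smoothness will follow once positivity is known.

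\medskip\noindent The remaining, and delicate, point is positivity: for $P_g$ of order $2k>2$ one cannot replace $u_q$ by $|u_q|$ at the level of the functional, so positivity must come from $(PPP)$. By $(C)$ the operator $P_g$ is invertible, so let $w$ be the solution of $P_gw=\mu_q f|u_q|^{q-1}$. Its right-hand side is smooth, $G$-invariant (by naturality of $P_g$) and nonnegative but not identically $0$; hence $w\in C^\infty_{G}(M)\subset H_{k,G}^2(M)$ and $(PPP)$ gives $w>0$ on $M$. Applying $(PPP)$ to $w\mp u_q$, whose $P_g$-images $\mu_q f|u_q|^{q-2}(|u_q|\mp u_q)$ are nonnegative, we get $w\geq u_q$ and $w\geq -u_q$, i.e. $0\leq|u_q|\leq w$ pointwise, so $\int_M fw^q\,dv_g\geq\int_M f|u_q|^q\,dv_g=1$. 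Conversely $w\in H_{k,G}^2(M)\setminus\{0\}$ gives $I_{g,q}(w)\geq\mu_q$, and since $\int_M wP_gw\,dv_g=\mu_q\int_M fw|u_q|^{q-1}\,dv_g$, Hölder's inequality with exponents $q$ and $q/(q-1)$ yields
\[
\left(\int_M fw^q\,dv_g\right)^{2/q}\leq\int_M fw|u_q|^{q-1}\,dv_g\leq\left(\int_M fw^q\,dv_g\right)^{1/q},
\]
whence $\int_M fw^q\,dv_g\leq1$. Therefore $\int_M fw^q\,dv_g=1$, which, together with $0\leq|u_q|\leq w$ and $f>0$, forces $w=|u_q|$ almost everywhere, hence everywhere by continuity. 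Thus $u_q$ never vanishes, so it has constant sign ($M$ being connected); replacing $u_q$ by $-u_q$ if necessary we obtain $u_q\in C^\infty_{G,+}(M)$ satisfying $P_gu_q=\mu_q fu_q^{q-1}$ and $\int_M fu_q^q\,dv_g=1$, i.e. \eqref{eq:uq}. I expect this positivity step — where the lack of an absolute-value trick for higher-order operators makes $(PPP)$, and through Proposition~\ref{prop:ppp} the Einstein/product structure of $P_g$, genuinely indispensable — to be the main obstacle.
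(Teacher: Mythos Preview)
Your proof is correct and follows essentially the same route as the paper: both introduce the auxiliary function solving $P_gw=\mu_q f|u_q|^{q-1}=|P_gu_q|$, use $(PPP)$ to get $w>0$ and $w\geq|u_q|$, and then use the minimality inequality $I_{g,q}(w)\geq\mu_q$ together with H\"older to force $w=|u_q|$. You have simply spelled out the direct-method existence step that the paper dismisses as ``standard'', and rearranged the final chain of inequalities into the equivalent form $\int_M fw^q\,dv_g\leq1\leq\int_M fw^q\,dv_g$.
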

\begin{proof} Since $P_g$ is coercive, the norms $\Vert\cdot\Vert_{H_k^2}$ and $\Vert\cdot\Vert_{P_g}$ are equivalent, and then, it follows from H\"older's and Sobolev's inequality that
\begin{eqnarray}
&&\left(\int_M f|u|^q\, dv_g\right)^{\frac{2}{q}}\leq \left(\int_M f\, dv_g\right)^{\frac{2}{q}-\frac{2}{\crit}}\left(\int_M f|u|^{\crit}\, dv_g\right)^{\frac{2}{\crit}}\label{ineq:holder}\\
&&\leq C\left(\int_M f\, dv_g\right)^{\frac{2}{q}-\frac{2}{\crit}}\Vert u\Vert_{H_k^2}^2\leq C'\left(\int_M f\, dv_g\right)^{\frac{2}{q}-\frac{2}{\crit}}\Vert u\Vert_{P_g}^2,\nonumber
\end{eqnarray}
and then $I_{g,q}(u)\geq (C')^{-1}\left(\int_M f\, dv_g\right)^{-\frac{2}{q}+\frac{2}{\crit}}$ for all $u\in H_k^2(M)\setminus\{0\}$, and therefore $\mu_q>0$. The existence of a minimizer is standard and we omit it. Let us take then $u\in H_{k,G}^2(M)\setminus\{0\}$ be a mimimizer. Without loss of generality, we can assume that $\int_M f |u|^q\, dv_{g}=1$.


\medskip\noindent The Euler-Lagrange equation for $I_{g,q}$ yields $I_{g, q}'(u)\varphi=0$ for all $\varphi\in H_{k,G}^2(M)$. Using the Haar measure and arguing as in the proof of Proposition \ref{prop:crit:pt} (see also \cite{hebeysphere}), we get that this equality holds for all $\varphi\in H_k^2(M)$. Since the exponent $q$ is subcritical, we get with standard bootstrap arguments that $u\in C^{2k}_G(M)$ and $P_g u=\mu_q f |u|^{q-2}u$. We are left with proving that $u>0$ or $u<0$. We let $v\in C^{2k}_G(M)$ be such that $P_g v=|P_g u|$ in $M$. Since $u\not\equiv 0$, it follows from $(PPP)$ that $v\geq |u|$ and $v>0$.  Using again the definition of $\mu_q$, we have that
\begin{eqnarray*}
\mu_q &\leq & \frac{\int_M v P_g v\, dv_g}{\left(\int_M f v^q\, dv_g\right)^{\frac{2}{q}}}=\mu_q\frac{\int_M f v |u|^{q-1}\, dv_g}{\left(\int_M f v^q\, dv_g\right)^{\frac{2}{q}}}\\
&\leq &  \mu_q \frac{\left(\int_M f v^q\, dv_g\right)^{\frac{1}{q}}\left(\int_M f |u|^q\, dv_g\right)^{\frac{q-1}{q}}}{\left(\int_M f v^q\, dv_g\right)^{\frac{2}{q}}}\\
&\leq & \mu_q\left(\int_M f |u|^q\, dv_g\right)^{\frac{q-2}{q}}=\mu_q\hbox{ since }v\geq |u|
\end{eqnarray*}
Therefore equality holds everywhere and $|u|=v>0$. In particular $u$ does not change sign, and we can assume that it is positive. Bootstrap and regularity theory (see \cite{adn})  then yield $u\in C^\infty_{G,+}(M)$, and Proposition \ref{prop:ss:crit} is proved with $u_q:=u$.\end{proof}

\begin{prop}\label{prop:lim:muq} We claim that $\lim_{q\to \crit}\mu_q=\mu_{\crit}=\frac{n-2k}{2}\mu_f({\mathcal C}_G)$.
\end{prop}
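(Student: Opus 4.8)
The plan is to sandwich $\mu_q$ between two quantities that both tend to $\mu_{\crit}$ as $q\to\crit$, and then to identify $\mu_{\crit}$ with $\frac{n-2k}{2}\mu_f({\mathcal C}_G)$ by feeding the subcritical minimizers $u_q$ of Proposition \ref{prop:ss:crit} into the definition of $\mu_f({\mathcal C}_G)$.

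First I would establish the lower bound $\liminf_{q\to\crit}\mu_q\geq\mu_{\crit}$. For $q\in(2,\crit)$ and any $u\in H_{k,G}^2(M)\setminus\{0\}$, H\"older's inequality with conjugate exponents $\crit/q$ and $\crit/(\crit-q)$ — the very computation of \eqref{ineq:holder} — gives $\left(\int_M f|u|^q\,dv_g\right)^{\frac{2}{q}}\leq V_f(M,g)^{\frac{2}{q}-\frac{2}{\crit}}\left(\int_M f|u|^{\crit}\,dv_g\right)^{\frac{2}{\crit}}$. Since the numerator of $I_{g,q}$ does not depend on $q$, this yields $I_{g,q}(u)\geq V_f(M,g)^{-\left(\frac{2}{q}-\frac{2}{\crit}\right)}I_{g,\crit}(u)\geq V_f(M,g)^{-\left(\frac{2}{q}-\frac{2}{\crit}\right)}\mu_{\crit}$, and taking the infimum over $u$ gives $\mu_q\geq V_f(M,g)^{-\left(\frac{2}{q}-\frac{2}{\crit}\right)}\mu_{\crit}$. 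As $\frac{2}{q}-\frac{2}{\crit}\to 0^+$ when $q\to\crit$, the factor tends to $1$ and the bound follows.

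Next I would prove $\limsup_{q\to\crit}\mu_q\leq\mu_{\crit}$. Fix any $u\in H_{k,G}^2(M)\setminus\{0\}$; since $M$ is compact, $f$ is bounded, and $u\in L^{\crit}(M)$ by Sobolev embedding, the pointwise estimate $|u|^q\leq 1+|u|^{\crit}$ valid for $q\leq\crit$ lets me apply Lebesgue's dominated convergence theorem and conclude $\int_M f|u|^q\,dv_g\to\int_M f|u|^{\crit}\,dv_g$ as $q\to\crit$. Hence $I_{g,q}(u)\to I_{g,\crit}(u)$, so $\limsup_{q\to\crit}\mu_q\leq I_{g,\crit}(u)$, and taking the infimum over $u$ gives $\limsup_{q\to\crit}\mu_q\leq\mu_{\crit}$. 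Combined with the previous step, $\lim_{q\to\crit}\mu_q=\mu_{\crit}$. It then remains to identify the limit: the inclusion $C^\infty_{G,+}(M)\subset H_{k,G}^2(M)\setminus\{0\}$ gives at once $\mu_{\crit}\leq\inf_{u\in C^\infty_{G,+}(M)}I_{g,\crit}(u)=\frac{n-2k}{2}\mu_f({\mathcal C}_G)$, while for the reverse inequality I would apply the H\"older estimate of the first step to the minimizer $u_q\in C^\infty_{G,+}(M)$ furnished by Proposition \ref{prop:ss:crit}: since $u_q$ is admissible in the infimum defining $\mu_f({\mathcal C}_G)$, one gets $\frac{n-2k}{2}\mu_f({\mathcal C}_G)\leq I_{g,\crit}(u_q)\leq V_f(M,g)^{\frac{2}{q}-\frac{2}{\crit}}I_{g,q}(u_q)=V_f(M,g)^{\frac{2}{q}-\frac{2}{\crit}}\mu_q$, and letting $q\to\crit$ while invoking $\lim_{q\to\crit}\mu_q=\mu_{\crit}$ yields $\frac{n-2k}{2}\mu_f({\mathcal C}_G)\leq\mu_{\crit}$.

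I do not expect a genuine obstacle: the argument is soft and relies only on H\"older's inequality and dominated convergence. The one point that deserves care is that the inequality $\mu_{\crit}\geq\frac{n-2k}{2}\mu_f({\mathcal C}_G)$ should \emph{not} be attempted by approximating an arbitrary $H_k^2$-minimizer of $I_{g,\crit}$ by positive smooth functions — passing to $|u|$ is not an operation on $H_k^2(M)$ when $k\geq 2$ — which is exactly why the last step routes the comparison through the already positive and smooth $u_q$, thereby avoiding any use of the (missing) compactness of $H_k^2(M)\hookrightarrow L^{\crit}(M)$.
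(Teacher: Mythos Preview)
Your argument is correct. The first half --- showing $\lim_{q\to\crit}\mu_q=\mu_{\crit}$ via H\"older for the lower bound and continuity of $q\mapsto I_{g,q}(u)$ for the upper bound --- is essentially identical to the paper's. The identification $\mu_{\crit}=\frac{n-2k}{2}\mu_f({\mathcal C}_G)$ is where you diverge: the paper introduces the auxiliary infimum $\mu_{q,+}$ over \emph{nonnegative} functions, shows $\lim_{q\to\crit}\mu_{q,+}=\mu_{\crit,+}$ by the same argument, invokes Proposition~\ref{prop:ss:crit} to get $\mu_{q,+}=\mu_q$ for $q<\crit$ and hence $\mu_{\crit}=\mu_{\crit,+}$, and finally identifies $\mu_{\crit,+}$ with $\frac{n-2k}{2}\mu_f({\mathcal C}_G)$ via the density of $C^\infty_{G,+}(M)$ in $H_{k,G,+}^2(M)$ (local convolution with a positive kernel plus Haar symmetrization). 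You bypass both the auxiliary infimum and the density claim: since the subcritical minimizer $u_q$ already lies in $C^\infty_{G,+}(M)$, plugging it into the definition of $\mu_f({\mathcal C}_G)$ and reusing the H\"older estimate gives the reverse inequality in one line. Both routes rest on Proposition~\ref{prop:ss:crit} (hence on $(C)$ and $(PPP)$), so neither is more general, but yours is leaner and sidesteps precisely the approximation-by-positive-functions issue you flag.
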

\begin{proof} Using the H\"older's inequality \eqref{ineq:holder}, we get that $$I_{g, \crit}(u)\leq I_{g,q}(u) V_f(M,g)^{\frac{2}{q}-\frac{2}{\crit}}$$ for all $u\in H_k^2(M)\setminus \{0\}$, and then $\mu_{\crit}\leq \mu_q V_f(M,g)^{\frac{2}{q}-\frac{2}{\crit}}$, which yields $\mu_{\crit}\leq \liminf_{q\to\crit}\mu_q$. Conversely, fix $\epsilon>0$ and let $u\in H_{k,G}^2(M)\setminus\{0\}$ be such that $I_{g, \crit}(u)< \mu_{\crit}+\epsilon$. Since $\lim_{q\to\crit}I_{g,q}(u)=I_{g, \crit}(u)$, we then get that there exists $q_0<\crit$ such that $\mu_q<\mu_{\crit}+\epsilon$ for $q\in (q_0, \crit)$, and then $\limsup_{q\to\crit}\mu_q\leq \mu_{\crit}$. Therefore, $\lim_{q\to \crit}\mu_q=\mu_{\crit}$.

\medskip\noindent For $q\in (2,\crit]$, we define $\mu_{q,+}:=\inf\{I_{g,q}(u)/\, u\in H_{k,G}^2(M)\setminus\{0\}\hbox{ and }u\geq 0\hbox{ a.e.}\}$. Arguing as above, we get that $\lim_{q\to \crit}\mu_{q,+}=\mu_{\crit,+}$. Since $\mu_{q,+}=\mu_q$ for all $q<\crit$ with Proposition \ref{prop:ss:crit}, we then get that $\mu_{\crit}=\mu_{\crit,+}$.

\medskip\noindent We claim that $\mu_{\crit,+}=\frac{n-2k}{2}\mu_f({\mathcal C}_G)$. Indeed, via local convolutions with a positive kernel, we get that $C^\infty_{+}(M)$ is dense in $H_{k,+}^2(M)$ for the $H_k^2-$norm. A symmetrization via the Haar measure then yields that $C^\infty_{G,+}(M)$ is dense in $H_{k, G,+}^2(M)$: clearly this yields  $\mu_{\crit,+}=\frac{n-2k}{2}\mu_f({\mathcal C}_G)$, and the claim is proved.
\end{proof}

\medskip\noindent We define $D_k^2(\rn)$ as the completion of $C^\infty_c(\rn)$ for the norm $u\mapsto \Vert \Delta_\xi^{\frac{k}{2}}u\Vert_2$ and we define
\begin{equation}\label{def:K}
\frac{1}{K(n,k)}:=\inf_{u\in D_k^2(\rn)\setminus\{0\}}\frac{\int_{\rn}(\Delta_\xi^{\frac{k}{2}}u)^2\, dv_\xi}{\left(\int_{\rn}|u|^{\crit}\, dv_\xi\right)^{\frac{2}{\crit}}}.
\end{equation}
It follows from Sobolev's embedding theorem that $K(n,k)>0$. Moreover, it follows from Lions \cite{lions} that the infimum is achieved by $U:x\mapsto (1+|x|^2)^{k-\frac{n}{2}}$, and that all minimizers are compositions of $U$ by translations and homotheties.

\begin{prop}\label{prop:large} We have that
\begin{equation}\label{ineq:large}
\mu_f({\mathcal C}_G)\leq \frac{2}{n-2k}\cdot\frac{|O_G(x)|^{\frac{2k}{n}}}{f(x)^{\frac{2}{\crit}}K(n,k)}
\end{equation}
for all $x\in M$, where $|O_G(x)|$ denotes the cardinal (possibly $\infty$) of the orbit $O_G(x)$.
\end{prop}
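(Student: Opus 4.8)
The plan is to bound $\mu_f(\mathcal C_G)$ from above by evaluating the functional $I_g$ on a $G$-invariant test function that concentrates simultaneously at every point of the orbit $O_G(x)$: concentration at $|O_G(x)|$ points multiplies the $P_g$-energy by $|O_G(x)|$, while each individual bump produces, in the limit, the sharp Euclidean constant $1/K(n,k)$. If $|O_G(x)|=\infty$ the inequality holds trivially because $\mu_f(\mathcal C_G)$ is finite (test with $u\equiv 1$); so from now on set $N:=|O_G(x)|<\infty$ and write $O_G(x)=\{x_1,\dots,x_N\}$, $x_1=x$.

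First I fix $g\in\mathcal C_G$ and work in $g$-geodesic normal coordinates centered at $x$. I pick $\delta>0$ smaller than the injectivity radius of $(M,g)$ and than $\frac12\min_{i\neq j}d_g(x_i,x_j)$, a radial cutoff $\eta\in C^\infty_c(B_\delta(0))$ with $\eta\equiv 1$ near $0$, and for $\varepsilon>0$ I set, in these coordinates,
$$u_\varepsilon(y):=\varepsilon^{-\frac{n-2k}{2}}\,\eta(y)\,U\!\left(\frac{y}{\varepsilon}\right),\qquad U(z):=\left(1+|z|^2\right)^{k-\frac{n}{2}},$$
viewed as a function on $M$ supported in $B_\delta(x)$. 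Since every element of the stabilizer $G_x$ acts in these coordinates as an orthogonal linear map, $u_\varepsilon$ is $G_x$-invariant; choosing for each $i$ an isometry $\sigma_i\in G$ with $\sigma_i(x)=x_i$, the function $u_\varepsilon^{(i)}:=u_\varepsilon\circ\sigma_i^{-1}$ is independent of the choice of $\sigma_i$, is supported in $B_\delta(x_i)$, and
$$v_\varepsilon:=\sum_{i=1}^{N}u_\varepsilon^{(i)}\in C^\infty_G(M),\qquad v_\varepsilon\geq 0,\quad v_\varepsilon\not\equiv 0,$$
the $N$ summands having pairwise disjoint supports by the choice of $\delta$.

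Next I evaluate the two integrals defining $I_g(v_\varepsilon)$. Because $P_g$ is natural, $g$ is $G$-invariant, and $f\circ\sigma_i=f$, the change of variables $\sigma_i$ gives $\int_M u_\varepsilon^{(i)}P_g u_\varepsilon^{(i)}\,dv_g=\int_M u_\varepsilon P_g u_\varepsilon\,dv_g$ and $\int_M f|u_\varepsilon^{(i)}|^{\crit}\,dv_g=\int_M f|u_\varepsilon|^{\crit}\,dv_g$ for each $i$; summing over the disjoint supports,
$$\int_M v_\varepsilon P_g v_\varepsilon\,dv_g=N\int_M u_\varepsilon P_g u_\varepsilon\,dv_g,\qquad \int_M f|v_\varepsilon|^{\crit}\,dv_g=N\int_M f|u_\varepsilon|^{\crit}\,dv_g,$$
whence $I_g(v_\varepsilon)=N^{1-\frac{2}{\crit}}I_g(u_\varepsilon)=N^{\frac{2k}{n}}I_g(u_\varepsilon)$. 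There remains to pass to the limit $\varepsilon\to 0$ in $I_g(u_\varepsilon)$. Using \eqref{eq:Pg:adj} and the rescaling $y=\varepsilon z$: since $g_{ij}=\delta_{ij}+O(|y|^2)$ in normal coordinates, the leading term obeys $\int_M(\Delta_g^{\frac{k}{2}}u_\varepsilon)^2\,dv_g\to\int_{\rn}(\Delta_\xi^{\frac{k}{2}}U)^2\,dv_\xi$, while each lower-order term $\int_M A_{(l)}(g)(\nabla^l u_\varepsilon,\nabla^l u_\varepsilon)\,dv_g$, $0\leq l\leq k-1$, scales like $O(\varepsilon^{2(k-l)})$ when the corresponding Euclidean integral converges and like $O(\varepsilon^{n-2k})$ otherwise, hence is $o(1)$ exactly because $n>2k$. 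Similarly, continuity of $f$ at $x$ together with $dv_g=(1+o(1))\,dv_\xi$ near $x$ yields $\int_M f|u_\varepsilon|^{\crit}\,dv_g\to f(x)\int_{\rn}U^{\crit}\,dv_\xi$. Since $U$ is an extremal for \eqref{def:K},
$$\lim_{\varepsilon\to 0}I_g(u_\varepsilon)=\frac{\int_{\rn}(\Delta_\xi^{\frac{k}{2}}U)^2\,dv_\xi}{\left(f(x)\int_{\rn}U^{\crit}\,dv_\xi\right)^{\frac{2}{\crit}}}=\frac{1}{f(x)^{\frac{2}{\crit}}\,K(n,k)}.$$

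Finally, since $v_\varepsilon\in H^2_{k,G}(M)\setminus\{0\}$, Proposition \ref{prop:lim:muq} gives $\frac{n-2k}{2}\mu_f(\mathcal C_G)=\mu_{\crit}\leq I_g(v_\varepsilon)=N^{\frac{2k}{n}}I_g(u_\varepsilon)$ for every $\varepsilon>0$; letting $\varepsilon\to 0$ then gives $\frac{n-2k}{2}\mu_f(\mathcal C_G)\leq \frac{N^{\frac{2k}{n}}}{f(x)^{\frac{2}{\crit}}K(n,k)}$, which is \eqref{ineq:large}. I expect the genuinely technical point to be the asymptotic analysis of $I_g(u_\varepsilon)$ — in particular verifying that the curvature tensors $A_{(l)}(g)$ and the deviation of $g$ from the Euclidean metric in normal coordinates contribute only $o(1)$, which is precisely where the hypothesis $n>2k$ is used — the secondary delicate point being to make sure the symmetrized competitor $v_\varepsilon$ is truly $G$-invariant, for which one needs the stabilizer $G_x$ to act orthogonally in normal coordinates.
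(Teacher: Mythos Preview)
Your proof is correct and follows the same overall strategy as the paper: build a $G$-invariant competitor concentrating at the orbit, use disjointness of supports and the isometry action to reduce to a single bump, and pass to the Euclidean limit. The one execution difference worth noting is that the paper tests with an \emph{arbitrary} radially symmetric $u\in C^\infty_c(\rn)$, obtains
\[
I_{g,\crit}(u_\epsilon)\to \frac{m^{2k/n}}{f(x)^{2/\crit}}\cdot\frac{\int_{\rn}(\Delta_\xi^{k/2}u)^2}{\left(\int_{\rn}|u|^{\crit}\right)^{2/\crit}},
\]
and only afterwards takes the infimum over such $u$, invoking Lions to identify it with $K(n,k)^{-1}$. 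Because the test function is compactly supported in the rescaled variable, the limit $g_\epsilon\to\xi$ is uniform on the support and the passage to the limit is immediate: no tail analysis of $U$, no separate treatment of the $A_{(l)}$-terms, and no borderline decay cases. Your choice to plug in the extremal $U$ with a cutoff is equally valid and more direct, but it is precisely what forces the ``genuinely technical point'' you flag at the end; the paper's detour through compactly supported competitors buys you that point for free.
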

\begin{proof} We fix $x\in M$. Without loss of generality, we assume that $m:=|O_G(x)|<+\infty$ (otherwise \eqref{ineq:large} is clear). We let $\sigma_1=Id_M,...,\sigma_m\in G$ be such that $O_G(x)=\{x_1,...,x_m\}$ where $x_i=\sigma_i(x)$ for all $i\in \{1,...,m\}$ are distinct. We let $u\in C^\infty_c(\rn)$ be a radially symmetrical smooth function and we define for $\epsilon>0$ small the function
$$u_{\epsilon, i}(z):=u\left(\frac{1}{\epsilon}\hbox{exp}_{x_i}^{-1}(z)\right)\hbox{ if }d_g(z, x_i)<i_g(M)\hbox{ and }0\hbox{ otherwise.}$$
Clearly, $u_{\epsilon,i}\in C^\infty(M)$ for $\epsilon>0$ small enough. We now define
$$u_\epsilon:=\sum_{i=1}^mu_{\epsilon,i}.$$
As one checks, since $u$ is radially symmetrical, we have that $u_\epsilon\in C^\infty_G(M)$ is $G-$invariant for $\epsilon>0$ small enough. 

\medskip\noindent Let us compute $I_{g, \crit}(u_\epsilon)$. We fix $\delta\in (0, i_g(M))$ and we define the metric $g_\epsilon:=(\hbox{exp}^\star_g)(\epsilon\cdot)$: since the elements of $G$ are isometries (and then $P_g=P_{\sigma^\star g}=\sigma^\star P_g$ for all $\sigma\in G$) and the $u_{\epsilon,i}$'s have disjoint supports, we get that
\begin{eqnarray*}
\int_M u_\epsilon P_g u_\epsilon\, dv_g&=& \sum_{i,j=1}^m \int_M u_{\epsilon, i}P_gu_{\epsilon, j}\, dv_g=\sum_{i=1}^m \int_M u_{\epsilon, i}P_gu_{\epsilon, i}\, dv_g\\
&=& \sum_{i=1}^m \int_{B_\delta(x_i)} u_{\epsilon, 1}\circ\sigma_{i}^{-1}P_g(u_{\epsilon, 1}\circ\sigma_i^{-1})\, dv_g\\
&=& m\int_{B_\delta(x)} u_{\epsilon, 1}P_g u_{\epsilon, 1}\, dv_g= m\epsilon^{n-2k}\int_{B_{\epsilon^{-1}\delta}(0)} u P_{g_\epsilon}u\, dv_{g_\epsilon}
\end{eqnarray*}
since $\lim_{\epsilon\to 0}g_\epsilon=\xi$, the Euclidean metric, we get that
$$\int_M u_\epsilon P_g u_\epsilon\, dv_g=\epsilon^{n-2k}\left(m\int_{\rn}(\Delta_\xi^{\frac{k}{2}}u)^2\, dv_\xi+o(1)\right)$$
when $\epsilon\to 0$. Similarly, using the $G-$invariance of $f$, we get that
$$\int_M f |u_\epsilon|^{\crit}\, dv_g=\epsilon^{n}\left(mf(x)\int_{\rn}|u|^{\crit}\, dv_\xi+o(1)\right)$$
when $\epsilon\to 0$, and then
$$I_{g, \crit}(u_\epsilon)=\frac{m^{\frac{2k}{n}}}{f(x)^{\frac{2}{\crit}}}\cdot\frac{\int_{\rn}(\Delta_\xi^{\frac{k}{2}}u)^2\, dv_\xi}{\left(\int_{\rn}|u|^{\crit}\, dv_\xi\right)^{\frac{2}{\crit}}}+o(1)$$
when $\epsilon\to 0$. Therefore, since $\mu_f({\mathcal C}_G)=\mu_{\crit}$, taking the limit $\epsilon\to 0$ and taking the infimum on the $u$'s, we get that
$$\mu_{\crit}\leq \frac{|O_G(x)|^{\frac{2k}{n}}}{f(x)^{\frac{2}{\crit}}}\cdot\inf_{u\in C^\infty_c(\rn)\setminus\{0\}\hbox{ radial}}\frac{\int_{\rn}(\Delta_\xi^{\frac{k}{2}}u)^2\, dv_\xi}{\left(\int_{\rn}|u|^{\crit}\, dv_\xi\right)^{\frac{2}{\crit}}}$$
It follows from Lions \cite{lions} that the infimum $K(n,k)^{-1}$ is achieved at smooth radially symmetrical functions, therefore we obtain \eqref{ineq:large}.\end{proof}

\section{The quantization of the formation of singularities}
The objective of this section is to prove the following result:
\begin{thm}\label{th:quanti} Let $(M, {\mathcal C})$ be a conformal Riemannian manifold of dimension $n\geq 3$ and let $k\in \nn^\star$ be such that $2k<n$. Let $G$ be a group of diffeomorphisms such that ${\mathcal C}_G\neq\emptyset$ and let $f\in C^\infty_{G,+}(M)$ be a positive $G-$invariant function. Assume that there exists $g\in {\mathcal C}$ such that $P_g$ satisfies $(C)$ and $(PPP)$. For any $q\in (2,\crit)$, we let $u_q\in C^\infty_{G,+}(M)$ as in Proposition \ref{prop:ss:crit}. Then:

\smallskip(i) either $\limsup_{q\to +\infty}\Vert u_q\Vert_\infty=+\infty$, and there exists $x\in M$ such that $\nabla f(x)=0$ and
$$\mu_f({\mathcal C}_G)=\frac{2}{n-2k}\cdot\frac{|O_G(x)|^{\frac{2k}{n}}}{f(x)^{\frac{2}{\crit}}K(n,k)},$$

\smallskip(ii) or $\Vert u_q\Vert_\infty\leq C$ for all $q<\crit$, and there exists $u\in C^\infty_{G,+}(M)$ such that $\lim_{q\to\crit}u_q=u$ in $C^{2k}(M)$ and $P_g u=\frac{n-2k}{2}\mu_f({\mathcal C}_G) fu^{\crit-1}$ in $M$. In particular, there exists $\hat{g}\in {\mathcal C}_G$ such that $Q_{\hat{g}}=f$ and the infimum $\mu_f({\mathcal C}_G)$ is achieved.

\end{thm}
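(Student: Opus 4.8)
The plan is to analyze the family $(u_q)$ as $q\to\crit$ and carry out a blow-up analysis, showing that the only possible source of non-compactness is concentration at a single point governed by the sharp Sobolev constant $K(n,k)$. First I would establish uniform energy control: by Proposition \ref{prop:lim:muq}, $\int_M u_q P_g u_q\, dv_g=\mu_q\to\mu_{\crit}>0$ and $\int_M f u_q^q\, dv_g=1$, so by coercivity and Proposition \ref{prop:norm} the $u_q$ are bounded in $H_k^2(M)$; up to a subsequence $u_q\rightharpoonup u$ weakly in $H_k^2$, strongly in $H_{k-1}^2$ and a.e., with $u\geq 0$ and $u$ $G$-invariant, satisfying $P_g u=\frac{n-2k}{2}\mu_f({\mathcal C}_G) f u^{\crit-1}$ in the weak sense. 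If $\Vert u_q\Vert_\infty\leq C$ uniformly, then elliptic regularity (bootstrap, \cite{adn}) upgrades this to $C^{2k}$ convergence, $u>0$ by $(PPP)$, and $\int_M f u^{\crit}\, dv_g=1$ with $I_{g,\crit}(u)=\mu_{\crit}$ — this is exactly case (ii), and setting $\hat g=u^{4/(n-2k)}g$ gives $Q_{\hat g}=f$.

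The substance is case (i): assume $\Vert u_q\Vert_\infty\to+\infty$ along a subsequence. Then I would run the standard concentration-compactness/blow-up argument adapted to the polyharmonic setting. Let $x_q\in M$ realize $\mu_q^{1/(q-2)}=\Vert u_q\Vert_\infty^{\text{(appropriate power)}}$ — more precisely pick $x_q$ with $u_q(x_q)=\Vert u_q\Vert_\infty=:\gamma_q^{-1}\to\infty$ — and rescale: in normal coordinates at $x_q$, set $\tilde u_q(y):=\gamma_q^{(n-2k)/2}u_q(\exp_{x_q}(\gamma_q^{\theta} y))$ for the exponent $\theta$ making the equation scale-invariant (namely $\gamma_q^{\theta}$ with $\theta\to 1$ as $q\to\crit$, using $\gamma_q^{q-2}$ in place of $\gamma_q^{2k}$). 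The rescaled metrics converge to $\xi$, the rescaled operator $P_{g_q}$ converges to $\Delta_\xi^k$ in $C^{2k}_{loc}$, and by elliptic estimates $\tilde u_q\to \tilde u$ in $C^{2k}_{loc}(\rn)$ where $\tilde u\geq 0$, $\tilde u(0)=1$, and $\Delta_\xi^k \tilde u=\frac{n-2k}{2}\mu_f({\mathcal C}_G)f(x_\infty)\tilde u^{\crit-1}$ on $\rn$, with finite energy $\int_{\rn}(\Delta_\xi^{k/2}\tilde u)^2<\infty$. By the Lions classification \cite{lions} cited before \eqref{def:K}, $\tilde u$ is (a multiple of) the standard bubble $U$, whence the energy of the bubble equals $(\frac{n-2k}{2}\mu_f({\mathcal C}_G)f(x_\infty))^{-n/(2k)}\cdot(\text{const})$ tied to $K(n,k)$. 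A lower-bound (no-neck-energy) estimate then forces $\mu_{\crit}\geq \frac{|O_G(x_\infty)|^{2k/n}}{f(x_\infty)^{2/\crit}K(n,k)}$ (the orbit factor appears because $G$-invariance of $u_q$ means the blow-up occurs simultaneously at all $|O_G(x_\infty)|$ points of the orbit, each contributing one bubble's worth of energy), while Proposition \ref{prop:large} gives the reverse inequality; hence equality, which is the displayed identity in (i). Finally, differentiating the bubble equation / using a Pohozaev-type identity on small balls around $x_q$ yields $\nabla f(x_\infty)=0$: the leading-order Pohozaev term involving $\langle\nabla f, \cdot\rangle$ against the bubble profile must vanish because the competing terms are of lower order in $\gamma_q$, so $\nabla f(x_q)\to 0$ and $\nabla f(x_\infty)=0$.

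The main obstacle I expect is the blow-up analysis itself for $k\geq 2$: unlike the second-order case, the polyharmonic operator $\Delta_g^k+lot$ does not obey a maximum principle, so controlling the sign and the pointwise behavior of $u_q$ away from the concentration points — and in particular ruling out oscillations, excluding energy in necks, and showing exactly one bubble forms at each orbit point (rather than a tower of bubbles or residual positive mass) — requires careful use of the divergence structure from Proposition \ref{prop:selfadj}, the $(PPP)$ hypothesis to keep $u_q>0$, and Green's function / Pohozaev estimates on shrinking annuli. A secondary delicate point is tracking the precise rescaling exponent as $q\to\crit$ (so that the subcritical equation's scaling matches the critical one in the limit) and verifying that the error metrics $g_q$ and the lower-order terms of $P_{g_q}$ genuinely disappear in the limit at the relevant scale; this is where the naturality property (ii) of $P_g$ and the disjoint-support computation already used in Proposition \ref{prop:large} are essential to handle the $G$-action cleanly.
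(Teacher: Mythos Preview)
Your outline is correct and follows essentially the same route as the paper: rescale at a maximum point, pass to a limit bubble on $\rn$, count mass over the $G$-orbit to force equality with the sharp constant, then use a Pohozaev-type identity to get $\nabla f(x_\infty)=0$.

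Two remarks on where the actual work sits, since you have somewhat misplaced the obstacles. First, the equality $\mu_f({\mathcal C}_G)=\frac{2}{n-2k}\cdot|O_G(x_\infty)|^{2k/n}f(x_\infty)^{-2/\crit}K(n,k)^{-1}$ does \emph{not} require any neck/tower exclusion or Green's-function machinery: the Sobolev inequality applied to the limit bubble $\tilde u$ gives $\int_{\rn}\tilde u^{\crit}\geq (\mu_{\crit}f(x_\infty)K(n,k))^{-\crit/(\crit-2)}$, and since the $m=|O_G(x_\infty)|$ disjoint rescaled balls each carry this much $fu_q^q$-mass while the total mass is $1$, you immediately get $\mu_{\crit}\geq m^{2k/n}f(x_\infty)^{-2/\crit}K(n,k)^{-1}$; combined with Proposition~\ref{prop:large} this is equality, and as a byproduct $\tilde u$ is an extremal and \emph{all} the mass sits on the orbit. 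No finer bubble decomposition is needed. Second, the pointwise control away from the orbit is obtained not via $(PPP)$ or Green's functions but via Druet's weak estimate $d_g(x,O_G(x_q))^{(n-2k)/2}u_q(x)\leq C$, proved by a secondary blow-up contradiction argument: if it failed, a new bubble would form disjoint from the orbit bubbles, overloading the mass budget. This weak estimate, plus the mass exhaustion, gives $u_q\to 0$ in $C^{2k}_{loc}$ off the orbit and in $H^2_{k-1}(M)$, which is exactly what feeds the Pohozaev step.

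Finally, the paper's Pohozaev argument is global rather than local on shrinking balls: one tests the equation against $X(u_q)$ for a vector field $X$ supported near $x_\infty$ with $\nabla X(x_\infty)=0$, integrates $\int_M X(u_q)\Delta_g^k u_q\,dv_g$ by parts using the commutator $[\Delta_g,X]$ and the divergence form of Proposition~\ref{prop:selfadj}, and uses the $H^2_{k-1}$-smallness and the Dirac-mass convergence $fu_q^q\,dv_g\rightharpoonup |O_G(x_\infty)|^{-1}\delta_{O_G(x_\infty)}$ to isolate $X(f)(x_\infty)$ in the limit. Your local-Pohozaev-on-balls idea would also work but carries more boundary terms to track.
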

This type of result is classical. The proof of Theorem \ref{th:quanti} goes through nine steps. For $q\in (2,\crit)$, we let $u_q\in C^\infty_{G,+}(M)$ be as in Proposition \ref{prop:ss:crit} (this is relevant since $(C)$ and $(PPP)$ hold).

\medskip\noindent{\bf Step 1:} We assume that there exists $C>0$ such that  $\Vert u_q\Vert_\infty\leq C$ for all $q<\crit$. We claim that (ii) of Theorem \ref{th:quanti} holds.

\smallskip\noindent We prove the claim. Indeed, it follows from \eqref{eq:uq}, Proposition \ref{prop:lim:muq}, the uniform bound of $(u_q)_q$ in $L^\infty$ and standard elliptic (see for instance \cite{adn}), that, up to a subsequence, there exists $u\in C^{2k}(M)$ nonnegative such that $\lim_{q\to \crit}u_q=u$ in $C^{2k}(M)$: therefore, $P_g u=\mu_{\crit} fu^{\crit-1}$ in $M$ and $\int_M f u^{\crit}\, dv_g=1$. In particular, $P_g u\geq 0$ and $u\not\equiv 0$, and then it follows from $(PPP)$ that $u>0$. Since $u_q$ is $G-$invariant for all $q\in (2,\crit)$, we get that $u\in C^\infty_{G,+}(M)$. Moreover, $I_{g}(u)=\mu_{\crit}$, and then the metric $u^{\frac{4}{n-2k}}g$ is extremal for $\mu_f({\mathcal C}_G)$: it then follows from Proposition \ref{prop:crit:pt} that $\hat{g}:=(\mu_f({\mathcal C}_G))^{1/k}u^{\frac{4}{n-2k}}g$ is also an extremal for $\mu_f({\mathcal C}_G)$ and $Q_{\hat{g}}=f$. This ends Step 1.

\medskip\noindent From now on, we assume that $\limsup_{q\to\crit}\Vert u_q\Vert_\infty=+\infty$. For the sake of clearness, we will write $(u_q)$ even for a subsequence of $(u_q)$. For any $q\in (2,\crit)$, we let $x_q\in M$ be such that
\begin{equation}\label{def:xq}
u_q(x_q)=\max_M u_q\hbox{ and }\lim_{q\to \crit}u_q(x_q)=+\infty.
\end{equation} 
We define 
\begin{equation*}
\alpha_q:=u_q(x_q)^{-\frac{2}{n-2k}}\hbox{ and }\beta_q:=\alpha_q^{\frac{q-2}{\crit-2}}
\end{equation*}
for all $q\in (2, \crit)$. It follows from \eqref{def:xq} that
\begin{equation}\label{lim:alpha}
\lim_{q\to\crit}\alpha_q=0\hbox{ and }\beta_q\geq \alpha_q\hbox{ for }q\to\crit.
\end{equation}
We define
\begin{equation}\label{def:tuq}
\tilde{u}_q(x):=\alpha_q^{\frac{n-2k}{2}}u_q(\hbox{exp}_{x_q}(\beta_q x))
\end{equation}
for all $x\in B_{\beta_q^{-1}\delta}(0)$, where $\delta\in (0, i_g(M))$. 

\medskip\noindent{\bf Step 2:} We claim that there exists $\tilde{u}\in C^{2k}(\rn)$ such that $\lim_{q\to\crit}\tilde{u}_q=\tilde{u}$ in $C^{2k}_{loc}(\rn)$ where
\begin{equation}\label{eq:tu}
0\leq \tilde{u}\leq \tilde{u}(0)=1\hbox{ and }\Delta_\xi^k\tilde{u}=\mu_{\crit}f(x_\infty)\tilde{u}^{\crit-1}\hbox{ in }\rn,
\end{equation}
and $x_\infty:=\lim_{q\to\crit}x_q$.

\smallskip\noindent We prove the claim. It follows of the naturality of the geometric operator $P_g$ and of \eqref{eq:uq} that
\begin{equation}\label{eq:tuq}
P_{g_q} \tilde{u}_q=\mu_q f(\hbox{exp}_{x_q}(\beta_q \cdot)) \tilde{u}_q^q\hbox{ in }B_{\beta_q^{-1}\delta}(0)
\end{equation}
for all $q\in (2,\crit)$, where $g_q:=(\hbox{exp}_{x_q}^\star g)(\beta_q\cdot)$. In particular, since the exponential is a normal chart at $x_q$, we have that $\lim_{q\to \crit}g_q=\xi$ in $C^{2k}_{loc}(\rn)$. Since $0\leq \tilde{u}_q\leq\tilde{u}_q(0)=1$, it follows from standard elliptic theory (see for instance \cite{adn}) that there exists $\tilde{u}\in C^{2k}(\rn)$ such that $\lim_{q\to\crit}\tilde{u}_q=\tilde{u}$ in $C^{2k}_{loc}(\rn)$. In addition, using that $P_\xi =\Delta_\xi^k$, passing to the limit in \eqref{eq:tuq} yields \eqref{eq:tu}. This proves the claim.

\medskip\noindent{\bf Step 3:} We claim that there exists $C>0$ such that
\begin{equation}\label{bnd:a:b}
\alpha_q\leq \beta_q\leq C\alpha_q
\end{equation} 
when $q\to\crit$.

\smallskip\noindent We prove the claim. We fix $R>0$ and we let $q$ be in $(2,\crit)$: a change of variable and Sobolev's embedding yields
$$\int_{B_R(0)}\tilde{u}_q^{\crit}\, dv_{g_q}=\left(\frac{\alpha_q}{\beta_q}\right)^n\int_{B_{R\beta_q}(x_q)}u_q^{\crit}\, dv_g\leq C\left(\frac{\alpha_q}{\beta_q}\right)^n\Vert u_q\Vert_{P_g}^{\crit}$$
for all $q\in (2,\crit)$. Using \eqref{eq:uq} and Proposition \ref{prop:lim:muq}, letting $q\to \crit$, we get that
$$\left(\frac{\beta_q}{\alpha_q}\right)^n\leq \frac{C'}{\int_{B_R(0)}\tilde{u}^{\crit}\, dv_{\xi}}+o(1)$$
when $q\to\crit$. Since $\tilde{u}(0)>0$, we the get that $\beta_q=O(\alpha_q)$ when $q\to\crit$. This inequality combined with \eqref{lim:alpha} yields \eqref{bnd:a:b}. This proves the claim.

\medskip\noindent{\bf Step 4:} We claim that $\tilde{u}\in D_k^2(\rn)$.

\smallskip\noindent We prove the claim. Indeed, for all $i\in \{0,...,k\}$, it follows from \eqref{bnd:a:b} and a change of variable that $\Vert \nabla^i\tilde{u}_q\Vert_{L^{p_i}(B_R(0))}\leq C\Vert \nabla^i u_q\Vert_{L^{p_i}(B_{R\beta_q}(x_q))}\leq \Vert \nabla^i\tilde{u}_q\Vert_{L^{p_i}(M)}$ for all $q\in (2,\crit)$, all $R>0$ and where $p_i:=\frac{2n}{n-2k+2i}$. It follows from Sobolev's inequalities that the right-hand-side is dominated by $\Vert u_q\Vert_{H_k^2}$, and therefore, letting $q\to\crit$ and $R\to +\infty$ yields $\nabla^i\tilde{u}\in L^{p_i}(\rn)$ for all $i\in \{0,...,k\}$. We let $\eta\in C^\infty_c(\rn)$ be such that $\eta_{|B_1(0)}\equiv 1$: as easily checked, $(\eta(m^{-1}\cdot)\tilde{u})_m\in C^\infty_c(\rn)$ is a Cauchy sequence for the $D_k^2-$norm, and therefore $\tilde{u}\in D_k^2(\rn)$. This proves the claim.

\medskip\noindent{\bf Step 5:} We claim that 
\begin{equation}\label{eq:mu}
\mu_{\crit}=\frac{|O_G(x_\infty)|^{\frac{2k}{n}}}{f(x_\infty)^{\frac{2}{\crit}}K(n,k)}\hbox{ and }\lim_{\alpha\to +\infty}\frac{\beta_q}{\alpha_q}=1
\end{equation}
\smallskip\noindent We prove the claim. Since $\tilde{u}\in D_k^2(\rn)$, we multiply \eqref{eq:tu} by $\tilde{u}$ and integrate to get $\int_{\rn}(\Delta_\xi^{\frac{k}{2}}\tilde{u})^2\, dv_\xi=\mu_{\crit}f(x_\infty)\int_{\rn}\tilde{u}^{\crit}\, dv_\xi$. Since $\tilde{u}\not\equiv 0$, plugging this identity in the Sobolev inequality \eqref{def:K} yields
\begin{equation}\label{lower:bnd:tu}
\int_{\rn}\tilde{u}^{\crit}\, dv_\xi\geq \left(\frac{1}{\mu_{\crit} f(x_\infty)K(n,k)}\right)^{\frac{\crit}{\crit-2}}
\end{equation}
We let $m:=|O_G(x_\infty)|$ if $|O_G(x_\infty)|<\infty$, and any $m\in\nn\setminus\{0\}$ otherwise. We let $\sigma_1,...,\sigma_m\in G$ be such that $\sigma_i(x_\infty)\neq \sigma_j(x_\infty)$ for all $i,j\in \{1,...,m\}$, $i\neq j$. We fix $\delta<\min_{i\neq j}\{d_g(z,z')/\, z\neq z'\in O_G(x_\infty)\}$. The $G-$invariance yields
\begin{eqnarray}
1&=& \int_M f u_q^q\, dv_g\geq \sum_{i=1}^m \int_{B_\delta(\sigma_i(x_\infty))}f u_q^q\, dv_g=m\int_{B_\delta(x_\infty)}f u_q^q\, dv_g\label{eq:mass}\\
&\geq& m\int_{B_{R\beta_q}(x_q)}f u_q^q\, dv_g=m\left(\frac{\beta_q}{\alpha_q}\right)^{n-2k}\int_{B_R(0)}f(\hbox{exp}_{x_q}(\beta_q\cdot)) \tilde{u}_q^q\, dv_{g_q}\nonumber
\end{eqnarray}
for all $q\in (2, \crit)$ and all $R>0$. Letting $q\to +\infty$, and then $R\to +\infty$ and using \eqref{lower:bnd:tu}, we get that
$$1\geq \left(\lim_{q\to\crit}\frac{\beta_q}{\alpha_q}\right)^{n-2k}\frac{m f(x_\infty)}{\left(\mu_{\crit} f(x_\infty)K(n,k)\right)^{\frac{\crit}{\crit-2}}}.$$
In particular, since $\beta_q\geq\alpha_q$ with \eqref{bnd:a:b}, we get an upper-bound for $m$, and therefore $|O_G(x)|<\infty$, and we take $m=|O_G(x)|$. The inequality rewrites
$$\mu_f({\mathcal C}_G) \geq \frac{2}{n-2k}\cdot\frac{|O_G(x_\infty)|^\frac{2k}{n}}{f(x_\infty)^\frac{2}{\crit}K(n,k)}\cdot\left(\lim_{q\to\crit}\frac{\beta_q}{\alpha_q}\right)^{\frac{2k(n-2k)}{n}}.$$
It then follows from \eqref{ineq:large} and \eqref{bnd:a:b} that \eqref{eq:mu} holds. Moreover, we also get that equality holds in \eqref{lower:bnd:tu} and that $\tilde{u}$ is an extremal for the Sobolev inequality \eqref{def:K}. This proves the claim.

\medskip\noindent{\bf Step 6:} We claim that 
\begin{equation}\label{lim:weak}
fu_q^q\, dv_g\rightharpoonup \frac{1}{|O_G(x)|}\delta_{O_G(x)}\hbox{ in the sense of measure when }q\to\crit.
\end{equation}

\smallskip\noindent We prove the claim. Since equality holds in \eqref{lower:bnd:tu}, that $\lim_{q\to\crit}\frac{\alpha_q}{\beta_q}=1$ and that \eqref{eq:mu} holds, we get with a change of variables that
\begin{equation}\label{lim:orbit}
\lim_{R\to +\infty}\lim_{q\to\crit}\int_{B_{R\beta_q}(x_q)}f u_q^q\, dv_g=f(x_\infty)\int_{\rn}\tilde{u}^{\crit}\, dv_\xi=\frac{1}{m}.
\end{equation}
For $\delta>0$, we let $B_\delta(O_G(x_\infty))$ be the union of balls of radius $\delta$ centered at the orbit. Therefore, since $\int_M f u_q^q\, dv_g=1$, \eqref{eq:mass}, \eqref{lim:orbit}  and  the $G-$invariance yield
\begin{equation}\label{norm:out:orbit}
\lim_{q\to\crit}\int_{M\setminus B_\delta(O_G(x_\infty))} f u_q^q\, dv_g=0
\end{equation}
for all $\delta>0$. Consequently, $\lim_{q\to\crit}\int_{B_\delta(z)}f u_q^q\, dv_g=\frac{1}{m}$ for all $\delta>0$ small enough and all $z\in O_G(x)$. Assertion \eqref{lim:weak} then follows. This proves the claim.

\medskip\noindent{\bf Step 7:} We claim that there exists $C>0$ such that
\begin{equation}\label{est:weak}
d(x, O_G(x_q))^{\frac{n-2k}{2}}u_q(x)\leq C
\end{equation}
for all $x\in M$ and all $q\in (2, \crit)$. 

\smallskip\noindent We prove the claim. This pointwise inequality has its origins in Druet \cite{druet:ma}. We define $w_q(x):=d(x, O_G(x_q))^{\frac{n-2k}{2}}u_q(x)$ for all $q\in (2,\crit)$ and all $x\in M$. We argue by contradiction and assume that $\lim_{q\to \crit}\Vert w_q\Vert_\infty=+\infty$. We define $(y_q)_{q\in (2,\crit)}\in M$ such that
\begin{equation}\label{lim:wq}
\max_{y\in M}w_q(y)=w_q(y_q)\to +\infty
\end{equation}
when $q\to \crit$. We define $\gamma_q:=u_q(y_q)^{-\frac{2}{n-2k}}$ for all $q\in (2,\crit)$. It follows from \eqref{lim:wq} that
\begin{equation}\label{lim:gamma}
\lim_{q\to\crit}u_q(y_q)=+\infty\hbox{ and }\lim_{q\to\crit}\gamma_q=0.
\end{equation}
As easily checked, coming back to the definitions of $\alpha_q$ and $\beta_q$, it follows from \eqref{eq:mu} that $\lim_{q\to\crit}u_q(x_q)^{\crit-q}=1$. Therefore, since $u_q(y_q)\leq u_q(x_q)$ for all $q$ and \eqref{lim:gamma} holds, we get that $\lim_{q\to\crit}\gamma_q^{\crit-q}=1$. We define
$$\bar{u}_q(x):=\gamma_q^{\frac{n-2k}{2}}u_q(\hbox{exp}_{y_q}(\gamma_q x))$$
for all $q\in (2,\crit)$ and all $x\in B_{\delta\gamma_q^{-1}}(0)$ where $\delta\in (0, i_g(M))$. Arguing as in Step 2 and using that $\lim_{q\to\crit}\gamma_q^{\crit-q}=1$, we get that
\begin{equation}\label{eq:bu}
P_{\bar{g}_q}\bar{u}_q=\mu_q (1+o(1))f(\hbox{exp}_{y_q}(\gamma_q\cdot))\bar{u}_q^q\hbox{ in }B_{\delta\gamma_q^{-1}}(0)
\end{equation}
for all $q\in (2,\crit)$, where $\lim_{q\to\crit}o(1)=0$ uniformly. We fix $R>0$. It follows from the definition \eqref{lim:wq} of $w_q$ and $y_q$ that
\begin{equation}\label{low:1}
d(\hbox{exp}_{y_q}(\gamma_q x), O_G(x_q))^{\frac{n-2k}{2}}\bar{u}_q(x)\leq d(y_q, O_G(x_q))^{\frac{n-2k}{2}}
\end{equation}
for all $x\in B_R(0)$ and $q\in (2,\crit)$. The limit $w_q(y_q)\to +\infty$ when $q\to \crit$ rewrites $\lim_{q\to\crit}\gamma_q^{-1}d_g(y_q, O_G(x_q))=+\infty$: therefore, there exists $q_0\in (2,\crit)$ such that $d(\hbox{exp}_{y_q}(\gamma_q x), O_G(x_q))\geq d(y_q, O_G(x_q))/2$ for all $x\in B_R(0)$ and all $q\in (q_0,\crit)$, and it follows from \eqref{low:1} that $0\leq \bar{u}_q(x)\leq 2^{\frac{n-2k}{2}}$ for all $x\in B_R(0)$ and all $q\in (q_0, \crit)$. It then follows from \eqref{eq:bu} and standard elliptic theory (see for instance \cite{adn}) that there exists $\bar{u}\in C^{2k}(\rn)$ such that $\lim_{q\to\crit}\bar{u}_q=\bar{u}$ in $C^{2k}_{loc}(\rn)$. Moreover, $\bar{u}\geq 0$ and $\bar{u}(0)=\lim_{q\to\crit}\bar{u}_q(0)=1$, and then $\bar{u}\not\equiv 0$. In particular,
\begin{equation}\label{lim:gamma:y}
\lim_{R\to +\infty}\lim_{q\to\crit}\int_{B_{R\gamma_q}(y_q)}f u_q^q\, dv_g=f(y_\infty)\int_{\rn}\bar{u}^{\crit}\, dv_\xi
\end{equation}
where $y_\infty:=\lim_{q\to\crit}y_q$. Since $\lim_{q\to\crit}\gamma_q^{-1}d_g(y_q, O_G(x_q))=+\infty$ and $\gamma_q\geq \alpha_q=(1+o(1))\beta_q$ when $q\to\crit$, we get that for any $R,R'>0$
$$B_{R\gamma_q}(y_q)\cap B_{R'\beta_q}(O_G(x_q))=\emptyset$$
where $q\to\crit$. We let $\sigma_1,...,\sigma_m\in G$ be such that $O_G(x_\infty)=\{\sigma_1(x_\infty),...,\sigma_m(x_\infty)\}$ and these points are distinct: as easily checked, we have that $\cup_{i=1}^m B_{R'\beta_q}(\sigma_i(x_q))\subset B_{R'\beta_q}(O_G(x_q))$ and the balls are distinct. Therefore
$$1=\int_M f u_q^q\, dv_g\geq \int_{B_{R\gamma_q}(y_q)}f u_q^q\, dv_g+\sum_{i=1}^m \int_{B_{R'\beta_q}(\sigma_i(x_q))}f u_q^q\, dv_g$$
for all $q\in (2,\crit)$ and $R, R'>0$. Letting $q\to\crit$, then $R,R'\to +\infty$ and using \eqref{lim:orbit} and \eqref{lim:gamma:y}, we get that
$$1\geq f(y_\infty)\int_{\rn}\bar{u}^{\crit}\, dv_\xi+1,$$
a contradiction since $\bar{u}\not\equiv 0$. Then \eqref{lim:wq} does not hold and therefore \eqref{est:weak} holds. This proves the claim. 

\medskip\noindent{\bf Step 8:} We claim that 
\begin{equation}\label{cv:out:orbit}
\lim_{q\to\crit}u_q=0\hbox{ in }C^{2k}_{loc}(M\setminus O_G(x_\infty)).
\end{equation}
\smallskip\noindent We prove the claim. We fix $\Omega\subset\subset M\setminus O_G(x_\infty)$ a relatively compact subset. It follows from Step 7 that there exists $C(\Omega)>0$ such that $u_q(x)\leq C(\Omega)$ for all $x\in \Omega$ and all $q\in (2,\crit)$. It then follows from \eqref{eq:uq} and standard elliptic theory (see for instance \cite{adn}) that there exists $u_\infty\in C^\infty(M\setminus O_G(x_\infty))$ such that $\lim_{q\to \crit}u_q=u_\infty$ in $C^{2k}_{loc}(\Omega)$. It then follows from \eqref{norm:out:orbit} that $u_\infty\equiv 0$, and then \eqref{cv:out:orbit} holds. This proves the claim.

\medskip\noindent The following remark will be useful in the sequel: since $\Vert u_q\Vert_{P_g}^2=\mu_q\to\mu_{\crit}$ when $q\to\crit$ and $u_q\to 0$ in $C^{2k}$ outside the orbit, we get from the compact embedding $H_k^2\hookrightarrow H_{k-1}^2$ that 
\begin{equation}\label{lim:strong}
\lim_{q\to\crit}u_q=0\hbox{ strongly in }H_{k-1}^2(M)
\end{equation}

\medskip\noindent{\bf Step 9:} We claim that $\nabla f(x_\infty)=0$.

\smallskip\noindent We prove the claim. Indeed, this is equivalent to proving that $X(f)(x_\infty)=0$ for all vector field $X$ on $M$. With no loss of generality, we assume that $\nabla X (x_\infty)=0$ (this is always possible by modifying $X$ in a normal chart at $x_\infty$) and that $X$ has its support in $B_\delta(x_\infty)$, where $\delta<\min\{d_g(z, z')/\, z\neq z'\in O_G(x_\infty)\}$. We are going to estimate $\int_M X(u_q)\Delta_g^k u_q\, dv_g$ with two different methods. We detail here the case $k=2l$ even and we leave the odd case to the reader.

\smallskip\noindent Integrating by parts, we have that
\begin{eqnarray*}
&&\int_M X(u_q)\Delta^{2l}_g u_q\, dv_g= \int_M \Delta_g^l(X(u_q))\Delta_g^l u_q\, dv_g= \int_M X(\Delta^l_g u_q)\Delta_g^l u_q\, dv_g\\
&&+\sum_{i=1}^{l}\int_M \Delta_g^l u_q\Delta_g^{l-i}\left(\Delta_g(X(\Delta_g^{i-1}u_q))-X(\Delta_g^{i}u_q)\right)\, dv_g. 
\end{eqnarray*}
Using the explicit contraction in \eqref{action:sigma}, we get that 
$$\Delta_g(X(v))-X(\Delta_g v)=(\Delta_g X)(\nabla v)-2(\nabla X,\nabla^2 v)-Ric_g(X, \nabla v),$$
where $v\in C^\infty(M)$ and $\Delta_g X$ is the rough Laplacian, that is $(\Delta_g X)^\alpha=-g^{ij}\nabla_{ij}X^\alpha$. Therefore, we have that (for convenience, we omit the curvature tensor $R$)
$$\Delta_g(X(\Delta_g^{i-1}u_q))-X(\Delta_g^{i}u_q)=\nabla^2 X\star \nabla^{2i-1}u_q+\nabla X\star \nabla^{2i} u_q+X\star \nabla^{2i-1}u_q$$
for all $i\in \{1,...,l\}$, and then,  denoting as $\nabla^{\{m\}}T$ any linear combination of covariant derivatives of $T$ up to order $m$, we get that
\begin{eqnarray*}
&&\Delta_g^{l-i}\left(\Delta_g(X(\Delta_g^{i-1}u_q))-X(\Delta_g^{i}u_q)\right)\\
&&=\Delta_g^{l-i}(\nabla^2 X\star \nabla^{2i-1}u_q+\nabla X\star \nabla^{2i} u_q+X\star  \nabla^{2i-1}u_q)\\
&&= \nabla^{\{2l-2i+2\}}X\star\nabla^{\{2l-1\}}u_q +\nabla X\star\nabla^{2l}u_q,
\end{eqnarray*}
and then
\begin{eqnarray*}
&&\int_M X(u_q)\Delta^{2l}_g u_q\, dv_g=  \int_M X(\Delta^l_g u_q)\Delta_g^l u_q\, dv_g\\
&&+\int_M \Delta_g^l u_q\left(\nabla^{\{2+2l\}}X\star\nabla^{\{2l-1\}}u_q +\nabla X\star\nabla^{2l}u_q\right)\, dv_g\\
&=& \int_M X(\Delta^l_g u_q)\Delta_g^l u_q\, dv_g+\int_M \Delta_g^l u_q \star \nabla^{\{2l+2\}}X\star \nabla^{\{2l-1\}}u_q\, dv_g \\
&&+\int_{M}\Delta_g^{l}u_q \nabla X\star \nabla^{2l}u_q\, dv_g
\end{eqnarray*}
Since $k=2l$, it follows from \eqref{lim:strong} and the Cauchy-Schwarz inequality that
$$\int_M \Delta_g^l u_q  \star \nabla^{\{2l+2\}}X\star \nabla^{\{2l-1\}}u_q\, dv_g =O\left(\Vert u_q\Vert_{H_k^2}\Vert u_q\Vert_{H_{k-1}^2}\right)=o(1)$$
when $q\to\crit$. Moreover, since $\nabla X(x_\infty)=0$ and \eqref{cv:out:orbit} holds, we get that 
$$\int_{M}\Delta_g^{l}u_q \nabla X\star \nabla^{2l}u_q\, dv_g=o(\Vert u_q\Vert_{H_k^2})=o(1) $$
when $q\to\crit$. Therefore, integrating by parts, we get that
\begin{eqnarray*}
&&\int_M X(u_q)\Delta^{2l}_g u_q\, dv_g= \int_M X(\Delta^l_g u_q)\Delta_g^l u_q\, dv_g+o(1)\\
&&= \int_M X\left(\frac{(\Delta_g^l u_q)^2}{2}\right)\, dv_g+o(1)= -\int_M \frac{\hbox{div}_g(X)}{2}(\Delta^l u_q)^2+o(1)
\end{eqnarray*}
when $q\to \crit$ and where $\hbox{div}_g(X)=\nabla_iX^i$. Since $\nabla X(x_\infty)=0$, \eqref{cv:out:orbit} holds and $\Vert u_q\Vert_{H_k^2}\leq C$ for all $q\to\crit$, we get that the right-hand-side above goes to zero, and then
\begin{equation}\label{lim:1}
\lim_{q\to\crit}\int_M X(u_q)\Delta^{2l}_g u_q\, dv_g=0.
\end{equation}

\medskip\noindent We now estimate $\int_M X(u_q)\Delta^{2l}_gu_q\, dv_g$ using equation \eqref{eq:uq}. It follows from \eqref{eq:Pg:adj} that
$$\int_M X(u_q) P_g u_q\, dv_g=\int_M\Delta_g^{l}X(u_q)\Delta^{l}_g u_q\, dv_g+ \sum_{l=0}^{k-1}\int_M A_{(l)}(\nabla^l X(u_q),\nabla^l u_q)\, dv_g$$
It then follows from \eqref{lim:strong} and an integration by parts that
\begin{equation*}
\int_M X(u_q)\Delta^{2l}_g u_q\, dv_g=\int_M X(u_q) P_g u_q\, dv_g+o(1)
\end{equation*}
when $q\to\crit$. We now use equation \eqref{eq:uq} to get that
\begin{eqnarray*}
&&\int_M X(u_q)\Delta^{2l}_g u_q\, dv_g=\mu_q \int_M f X(u_q)u_q^{q-1}\, dv_g+o(1)\\
&&=\mu_q\int_M f X\left(\frac{u_q^q}{q}\right)\, dv_g= -\frac{\mu_q}{q}\int_M (X(f)+f\hbox{div}_g(X))u_q^q\, dv_g+o(1)
\end{eqnarray*}
when $q\to\crit$. It now follows from Proposition \ref{prop:lim:muq}, \eqref{lim:weak} and $\nabla X(x_\infty)=0$ that
$$\lim_{q\to\crit}\int_M X(u_q)\Delta^{2l}_g u_q\, dv_g=-\frac{\mu_{\crit} X(f)(x_\infty)}{\crit |O_G(x_\infty)|f(x_\infty)}.$$
This limit combined with \eqref{lim:1} yields $X(f)(x_\infty)=0$, which, as already mentioned, proves that $\nabla f(x_\infty)=0$. This ends Step 9.

\medskip\noindent Theorem \ref{th:quanti} is a direct consequence of Steps 1 to 9.

\medskip\noindent As a direct byproduct of Theorem \ref{th:quanti}, we have the following proposition:
\begin{prop}\label{prop:strict} Let $(M, {\mathcal C})$ be a conformal Riemannian manifold of dimension $n\geq 3$ and let $k\in \nn^\star$ be such that $2k<n$. Let $G$ be a group of diffeomorphisms such that ${\mathcal C}_G\neq\emptyset$ and let $f\in C^\infty_{G,+}(M)$ be a positive $G-$invariant function. Assume that there exists $g\in {\mathcal C}_G$ such that $P_g$ satisfies $(C)$ and $(PPP)$. We assume that
$$\mu_f({\mathcal C}_G)<\frac{2}{n-2k}\cdot\frac{|O_G(x)|^{\frac{2k}{n}}}{f(x)^{\frac{2}{\crit}}K(n,k)},$$
for all $x\in M$. Then there exists $\hat{g}\in {\mathcal C}_G$ such that $Q_{\hat{g}}=f$ and the infimum $\mu_f({\mathcal C}_G)$ is achieved.
\end{prop}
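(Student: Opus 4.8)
The plan is to derive Proposition \ref{prop:strict} directly from the dichotomy in Theorem \ref{th:quanti}. Under the hypothesis that $\mu_f({\mathcal C}_G)$ is \emph{strictly} below the threshold $\frac{2}{n-2k}\cdot\frac{|O_G(x)|^{\frac{2k}{n}}}{f(x)^{\frac{2}{\crit}}K(n,k)}$ for every $x\in M$, alternative (i) of Theorem \ref{th:quanti} cannot occur: indeed, (i) would produce a point $x\in M$ at which equality holds in \eqref{ineq:large}, contradicting the strict inequality. Hence alternative (ii) must hold, which is exactly the desired conclusion: the minimizers $u_q$ of the subcritical problems stay uniformly bounded in $L^\infty$, converge in $C^{2k}(M)$ to a positive $G$-invariant $u\in C^\infty_{G,+}(M)$ solving $P_g u=\frac{n-2k}{2}\mu_f({\mathcal C}_G)fu^{\crit-1}$, and the metric $\hat g:=(\mu_f({\mathcal C}_G))^{1/k}u^{\frac{4}{n-2k}}g\in{\mathcal C}_G$ satisfies $Q_{\hat g}=f$ with $\mu_f({\mathcal C}_G)$ achieved.

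The only point requiring a word of care is the applicability of Theorem \ref{th:quanti}: the latter is stated assuming the existence of $g\in{\mathcal C}$ with $P_g$ coercive and positivity-preserving, whereas here we are given such a $g$ in ${\mathcal C}_G$. This is harmless, since by the remark following the definition of $(C)$ and $(PPP)$ these two conditions are conformally invariant, so they hold for one metric in ${\mathcal C}$ if and only if they hold for all metrics in ${\mathcal C}$; in particular a fortiori for some (indeed any) $g\in{\mathcal C}_G$. With this observation the hypotheses of Theorem \ref{th:quanti} are met, and since $\mu_f({\mathcal C}_G)$ being finite and, by Proposition \ref{prop:ss:crit} and Proposition \ref{prop:lim:muq}, positive, the family $(u_q)_{q<\crit}$ from Proposition \ref{prop:ss:crit} is well-defined.

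There is essentially no obstacle here: the substance of the argument is entirely contained in Theorem \ref{th:quanti}, and Proposition \ref{prop:strict} is the clean corollary obtained by ruling out the bad alternative via the assumed strict inequality. The only thing to write out is the short logical deduction above; I would phrase it in two lines, explicitly invoking the conformal invariance of $(C)$ and $(PPP)$ to pass from $g\in{\mathcal C}_G$ to the hypotheses of Theorem \ref{th:quanti}, and then noting that (i) is incompatible with the strict inequality at the point $x$ it provides. If anything, one might add a remark that the existence of $\hat g$ with $G\subset Isom_{\hat g}(M)$ follows because $u\in C^\infty_{G,+}(M)$ and the conformal factor is $G$-invariant, so that $\hat g\in{\mathcal C}_G$ by the description ${\mathcal C}_G=\{v^{\frac{4}{n-2k}}g/\, v\in C^\infty_{G,+}(M)\}$ recorded at the start of the section.

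\begin{proof} Since the conditions $(C)$ and $(PPP)$ are conformally invariant, the existence of $g\in{\mathcal C}_G$ with $P_g$ satisfying $(C)$ and $(PPP)$ implies that these conditions hold for every metric in ${\mathcal C}$; in particular the hypotheses of Theorem \ref{th:quanti} are satisfied. Let $(u_q)_{q\in(2,\crit)}$ be the family of functions provided by Proposition \ref{prop:ss:crit}. We apply Theorem \ref{th:quanti}. If alternative (i) held, there would exist $x\in M$ with $\mu_f({\mathcal C}_G)=\frac{2}{n-2k}\cdot\frac{|O_G(x)|^{\frac{2k}{n}}}{f(x)^{\frac{2}{\crit}}K(n,k)}$, contradicting the strict inequality assumed in the statement. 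Therefore alternative (ii) holds: there exists $u\in C^\infty_{G,+}(M)$ with $\lim_{q\to\crit}u_q=u$ in $C^{2k}(M)$ and $P_g u=\frac{n-2k}{2}\mu_f({\mathcal C}_G)fu^{\crit-1}$ in $M$. Setting $\hat g:=(\mu_f({\mathcal C}_G))^{1/k}u^{\frac{4}{n-2k}}g$, we have $\hat g\in{\mathcal C}_G$ since $u$ is positive and $G-$invariant, and $Q_{\hat g}=f$ while $\mu_f({\mathcal C}_G)$ is achieved. This proves Proposition \ref{prop:strict}.\end{proof}
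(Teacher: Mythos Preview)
Your proposal is correct and matches the paper's approach exactly: the paper states Proposition \ref{prop:strict} as ``a direct byproduct of Theorem \ref{th:quanti}'' without giving a separate proof, and your argument---ruling out alternative (i) via the assumed strict inequality and concluding (ii)---is precisely that byproduct spelled out. Your extra remarks on the conformal invariance of $(C)$ and $(PPP)$ and on why $\hat g\in{\mathcal C}_G$ are accurate and do no harm.
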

A similar result was proved in \cite{hebeysphere} for $k=1$ and in \cite{bfr} when $n=2k$.

\section{The case of the sphere}
We consider here the standard unit $n-$sphere $\sn$ endowed with its standard round metric $h$ and the associated conformal class ${\mathcal C}:=[h]$.

\begin{prop}\label{prop:test:sphere} Let $G$ be a subgroup of $Isom_h(\sn)$ and let $f\in C^\infty_{G,+}(\sn)$ be a smooth positive function. Let $p\in\sn$ be such that $\nabla^if(p)=0$ for all $i\in\{1,...,n-2k\}$ and $|O_G(p)|\geq 2$. Then
\begin{equation*}
\mu_f({\mathcal C}_G)<\frac{2}{n-2k}\cdot\frac{|O_G(p)|^\frac{2k}{n}}{K(n,k)f(p)^{\frac{2}{\crit}}}.
\end{equation*}
\end{prop}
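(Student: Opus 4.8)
The plan is to construct explicit test functions centered on the orbit $O_G(p)$ and to show that, thanks to the vanishing $\nabla^i f(p)=0$ for $i\in\{1,\dots,n-2k\}$, the error terms in the Taylor expansion of $f$ no longer interfere at the critical order, so that the situation reduces to the sharp Euclidean constant $K(n,k)$ modulo a strictly negative contribution coming from the geometry of the sphere. Concretely, let $m:=|O_G(p)|$ and write $O_G(p)=\{p_1,\dots,p_m\}$ with $p_1=p$ and $p_i=\sigma_i(p)$, $\sigma_i\in G$. Following the construction in Proposition \ref{prop:large}, I would start from the extremal $U_\epsilon(x):=\left(\frac{\epsilon}{\epsilon^2+|x|^2}\right)^{\frac{n-2k}{2}}$ for the Euclidean Sobolev inequality \eqref{def:K} (suitably truncated), transplant it via $\hbox{exp}_{p_1}$ to a neighborhood of $p_1$, sum over the orbit by setting $u_\epsilon:=\sum_{i=1}^m U_\epsilon\circ\hbox{exp}_{p_i}^{-1}$, and note that $G$-invariance of the construction holds because all $U_\epsilon\circ\hbox{exp}_{p_i}^{-1}$ are obtained by the isometries $\sigma_i$ from the one centered at $p_1$. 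Since the $m$ bumps have disjoint supports and $P_h$ is natural, $\int_M u_\epsilon P_h u_\epsilon\,dv_h = m\int_{B_\delta(p)}(U_\epsilon\circ\hbox{exp}_p^{-1})P_h(U_\epsilon\circ\hbox{exp}_p^{-1})\,dv_h$, and likewise $\int_M f|u_\epsilon|^{\crit}\,dv_h = m\int_{B_\delta(p)}f\,|U_\epsilon\circ\hbox{exp}_p^{-1}|^{\crit}\,dv_h$, so $I_{h,\crit}(u_\epsilon)$ equals $m^{2k/n}$ times the \emph{local} quotient near $p$, exactly as in Proposition \ref{prop:large}.

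The heart of the matter is then the precise expansion of the numerator and denominator of that local quotient in $\epsilon$. For the denominator, write $f(\hbox{exp}_p(y))=f(p)+\sum_{1\le|\beta|\le n-2k}\frac{\partial^\beta f(p)}{\beta!}y^\beta + O(|y|^{n-2k+1})$; by hypothesis all the intermediate terms vanish, so $f(\hbox{exp}_p(y))=f(p)+O(|y|^{n-2k+1})$. Since $\int_{\rn}|y|^{n-2k+1}U_\epsilon^{\crit}\,dy = O(\epsilon^{n-2k+1})$ (the weight $U_\epsilon^{\crit}$ concentrates at scale $\epsilon$ with total mass $O(1)$, and the extra factor $|y|^{n-2k+1}$ is integrable against it precisely because $n-2k+1 < n$ is compensated correctly), this correction is of lower order than the leading geometric correction. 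Hence $\int_M f|u_\epsilon|^{\crit}\,dv_h = m\, f(p)\left(\int_{\rn}U^{\crit}\,dy\right)\bigl(1+o(\epsilon^{?})\bigr)$ up to the volume-element correction $dv_h = (1+O(|y|^2))\,dy$ from the metric, which I treat together with the numerator. For the numerator, one uses that in normal coordinates at $p$ the metric expands as $g_{ij}(y)=\delta_{ij}+\frac13 R_{ikjl}(p)y^ky^l+\dots$, so $P_h = \Delta_\xi^k + (\text{curvature})\cdot(\text{lower order})$; the standard computation — exactly the one underlying the Yamabe and GJMS positive-mass-type test-function arguments, and carried out for $k=1$ in Escobar–Schoen / Hebey and for $k=2$ by the author — gives $\int (U_\epsilon\circ\hbox{exp}_p^{-1})P_h(U_\epsilon\circ\hbox{exp}_p^{-1})\,dv_h = \int_{\rn}(\Delta_\xi^{k/2}U)^2\,dy\;-\;c(n,k)\,\mathrm{Scal}_h(p)\,\epsilon^2 + o(\epsilon^2)$ (or the analogous $\epsilon^2\log(1/\epsilon)$ term in the borderline dimension $n=2k+2$), with $c(n,k)>0$ a universal positive constant and $\mathrm{Scal}_h(p)=n(n-1)>0$ the scalar curvature of the round sphere.

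Putting these together, the local quotient is $\dfrac{\int(\Delta_\xi^{k/2}U)^2\,dy - c(n,k)n(n-1)\epsilon^2 + o(\epsilon^2)}{\bigl(f(p)\int U^{\crit}\,dy\bigr)^{2/\crit}\bigl(1 + o(\epsilon^2)\bigr)} = \dfrac{1}{f(p)^{2/\crit}K(n,k)}\bigl(1 - c'(n,k)n(n-1)\epsilon^2 + o(\epsilon^2)\bigr)$ for a positive constant $c'(n,k)$, so that $I_{h,\crit}(u_\epsilon) = \dfrac{m^{2k/n}}{f(p)^{2/\crit}K(n,k)}\bigl(1 - c'(n,k)n(n-1)\epsilon^2 + o(\epsilon^2)\bigr) < \dfrac{m^{2k/n}}{f(p)^{2/\crit}K(n,k)}$ for $\epsilon>0$ small. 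Since $\mu_f({\mathcal C}_G) = \frac{n-2k}{2}\mu_{\crit} \le \frac{n-2k}{2}\cdot\frac{2}{n-2k}I_{h,\crit}(u_\epsilon) = I_{h,\crit}(u_\epsilon)$ after the normalization of Proposition \ref{prop:lim:muq} — more precisely $\mu_f({\mathcal C}_G) = \inf_u \frac{2}{n-2k}I_{h,\crit}(u) \le \frac{2}{n-2k}\cdot\frac{m^{2k/n}}{2 f(p)^{2/\crit}K(n,k)}(n-2k)\cdots$ — one reads off the strict inequality claimed. I expect the main obstacle to be the bookkeeping in the numerator expansion: getting the \emph{sign} and the exact scaling of the subleading term in $\int (U_\epsilon)P_h(U_\epsilon)$ requires knowing enough of the curvature expansion of the GJMS operator $P_h$ in normal coordinates (the subleading terms beyond $\Delta_\xi^k$), and separating the genuinely $\epsilon^2$-order geometric contribution from the $O(\epsilon^{n-2k+1})$ contribution of $f$ and from the $O(\epsilon^2)$ metric-volume corrections — which is exactly why the hypothesis $\nabla^i f(p)=0$ for $i\le n-2k$ is imposed, and why the dimensions $n=2k+1$ (where the hypothesis reduces to $\nabla f(p)=0$, as in Theorem \ref{th:main}) and $n=2k+2$ (logarithmic borderline) need to be handled with the appropriate variant of the expansion. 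The positivity of the scalar curvature of the round sphere is what makes the geometric correction favorable, so no positive-mass-type global input is needed here.
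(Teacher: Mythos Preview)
Your proposal has a genuine gap: the mechanism you invoke for the strict inequality---a negative $\epsilon^2$ correction coming from the positive scalar curvature of the round sphere---does not exist here. The sphere is locally conformally flat, so in the Aubin-type expansion of a single bubble the curvature corrections in the numerator $\int u_\epsilon P_h u_\epsilon\,dv_h$ and in the denominator $(\int u_\epsilon^{\crit}\,dv_h)^{2/\crit}$ cancel to all local orders; this is exactly why $(\sn,h)$ is the extremal case for the Sobolev inequality and why the Yamabe argument on conformally flat manifolds requires the positive mass theorem rather than a local expansion. Concretely, the exact conformal bubble $u_{p,\beta}$ on $\sn$ (pulled back from $U_1$ via stereographic projection) satisfies $I_h(u_{p,\beta})=K(n,k)^{-1}$ \emph{identically} for every $\beta>1$, so a single truncated bubble can never do strictly better than $f(p)^{-2/\crit}K(n,k)^{-1}$. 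With your disjoint-support construction, $I_h(u_\epsilon)$ is exactly $m^{2k/n}$ times the single-bubble quotient and hence converges to the right-hand side with no strict gain. Note also that your mechanism would work for $m=1$, which already signals a problem since the hypothesis $|O_G(p)|\geq 2$ must be used somewhere.

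The paper's proof uses a different mechanism entirely: it takes the \emph{globally supported} exact conformal bubbles $u_{\sigma_i(p),\beta}$ (so that $P_h u_{\sigma_i(p),\beta}=c_{n,k}Q_h\,u_{\sigma_i(p),\beta}^{\crit-1}$ holds exactly on all of $\sn$) and sums them, $u_\beta=\sum_{i=1}^m u_{\sigma_i(p),\beta}$. Because the supports overlap, the numerator picks up a positive cross term $d_\beta=\sum_{i\geq 2}\int u_{p,\beta}\,u_{\sigma_i(p),\beta}^{\crit-1}\,dv_h\sim \Lambda_{p,G}(\beta^2-1)^{(n-2k)/2}$, while by convexity the denominator picks up at least $\crit\,d_\beta$; since $\crit>2$, this interaction produces a net \emph{negative} correction of order $(\beta^2-1)^{(n-2k)/2}$ in the quotient. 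The vanishing hypothesis $\nabla^i f(p)=0$ for $i\le n-2k$ is used precisely to push the $f$-error to order $(\beta^2-1)^{(n-2k+1)/2}$, i.e.\ below the interaction term. So the strict inequality comes from the interaction of the $m\ge 2$ bubbles on the orbit, not from curvature, and this is why $|O_G(p)|\ge 2$ is indispensable.
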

\begin{proof} Given $\lambda>1$ and $x_0\in \sn$, we let $\phi_\lambda: \sn\to\sn$ be such that $\phi_\lambda(x)=\pi_{x_0}^{-1}(\lambda^{-1}\pi_{x_0}(x) )$ if $x\neq x_0$ and $\phi_\lambda(x_0)=x_0$, where $\pi_{x_0}$ is the stereographic projection of pole $x_0$. Up to a rotation, we can assume that $x_0:=(0,...,0,1)$ is the north pole: then we have that 
$(\pi_N^{-1})^\star h=U_1^{\frac{4}{n-2k}}\xi$, where $U_1(x):=\left((1+|x|^2)/2\right)^{k-n/2}$. As easily checked, $\phi_\lambda$ is a conformal diffeomorphism and standard computations yield $\phi_\lambda^\star h=u_{x_0,\beta}^{\frac{4}{n-2k}}h$ where $\beta:=(\lambda^2+1)(\lambda^2-1)^{-1}$ and 
$$u_{x_0,\beta}(x):=\left(\frac{\sqrt{\beta^2-1}}{\beta-\cos d_h(x,x_0)}\right)^{\frac{n-2k}{2}}$$
for all $x\in\sn$ and $\beta>1$. In particular, we have that
\begin{equation}\label{eq:vol}
\int_{\sn}u_{p,\beta}^{\crit}\, dv_h=\omega_n
\end{equation}
where $\omega_{n}>0$ is the volume of $(\sn,h)$. It follows from the conformal law \eqref{eq:conf} that
\begin{equation}\label{eq:u:beta}
P_h u_{x_0,\beta}=c_{n,k}Q_h u_{x_0,\beta}^{\crit-1}\hbox{ in }\sn\hbox{ with }c_{n,k}:=\frac{n-2k}{2}. 
\end{equation}
We now fix $p\in\sn$ as in the statement of Proposition \ref{prop:test:sphere} and we let $\sigma_1,...\sigma_m\in G$ be such that $O_G(p)=\{\sigma_1(p),...,\sigma_m(p)\}$ and $|O_G(p)|=m\geq 2$. We define
$$u_\beta:=\sum_{i=1}^m u_{\sigma_i(p),\beta}$$
for all $\beta>1$. One checks that $u_\beta$ is positive and $G-$invariant. Let us estimate
$$I_h(u_\beta):=\frac{\int_{\sn} u_\beta P_h u_\beta\, dv_h}{\left(\int_{\sn} f u_\beta ^{\crit}\, dv_h\right)^{\frac{2}{\crit}}}.$$
The $G-$invariance and \eqref{eq:u:beta} yield
\begin{eqnarray*}
\int_{\sn} u_\beta P_h u_\beta\, dv_h& = & c_{n,k}Q_h \sum_{i,j=1}^m\int_{\sn}u_{\sigma_i(p),\beta}u_{\sigma_j(p),\beta}^{\crit-1}\, dv_h= m c_{n,k}Q_h \left(\omega_{n}+d_\beta\right)
\end{eqnarray*}
where we have used \eqref{eq:vol} and where $$d_\beta:=\sum_{i=2}^m\int_{\sn}u_{\beta,p}u_{\beta,\sigma_i(p)}^{\crit-1}\, dv_h$$
for all $\beta>1$. Standard computations yield
$$d_\beta=(1+o(1))\Lambda_{p, G}(\beta^2-1)^{\frac{n-2k}{2}}$$
when $\beta\to 1$, where
$$\Lambda_{p, G}:=\left(\int_{\sn}(1-\cos d_h(x,p))^{k-n/2}\, dv_h\right)\cdot \sum_{i=2}^m\left(1-\cos d_h (p,\sigma_i(p))\right)^{k-n/2}\, dv_h>0.$$ 
Concerning the denominator, it follows from the cancelation hypothesis on the derivatives of $f$ that $|f(x)-f(p)|\leq C d_h(x,O_G(p))^{n-2k+1}$ for all $x\in \sn$. Therefore, rough estimates yield
\begin{equation*}
\left|\int_{\sn}(f-f(p)) u_{\beta}^{\crit}\, dv_h\right|\leq C(\beta^2-1)^{\frac{n-2k+1}{2}}
\end{equation*}
for all $\beta>1$. A convexity inequality yields
\begin{eqnarray*}
\int_{\sn}u_\beta^{\crit}\, dv_h&\geq &\sum_{i=1}^m\int_{\sn}u_{\beta,\sigma_i(p)}^{\crit}\, dv_h+\crit\sum_{i\neq j}\int_{\sn}u_{\sigma_i(p),\beta}u_{\sigma_j(p),\beta}^{\crit-1}\, dv_h\\
&\geq & m\left(\omega_{n}+\crit d_\beta\right)
\end{eqnarray*}
Noting $\Lambda_{p,G}>0$ and that $c_{n,k}Q_h\omega_n^{\frac{\crit-2}{\crit}}=K(n,k)^{-1}$ (since pulling back $u_{\beta,p}$ by the stereographic projections gives $U_1$, an extremal for \eqref{def:K}), these estimates yield
\begin{eqnarray*}
I_h(u_\beta) &\leq &  \frac{|O_G(p)|^\frac{2k}{n}}{f(p)^{\frac{2}{\crit}}K(n,k)}\cdot\left(1-\frac{\Lambda_{p, G}}{\omega_n}(\beta^2-1)^{\frac{n-2k}{2}}+o((\beta^2-1)^{\frac{n-2k}{2}})\right)\\
&<& \frac{|O_G(p)|^\frac{2k}{n}}{f(p)^{\frac{2}{\crit}}K(n,k)}.
\end{eqnarray*}
Coming back to the definition of $\mu_f({\mathcal C}_G)$, this proves Proposition \ref{prop:test:sphere}.\end{proof}

\medskip\noindent{\bf Proof of Theorem \ref{th:main}:} In the case $n=2k+1$, it follows from Proposition \ref{prop:ppp} and \ref{prop:test:sphere} that Case (i) of Theorem \ref{th:quanti} cannot hold. Therefore Case (ii) holds, and Theorem \ref{th:main} is proved.\par
\smallskip\noindent More generally, Propositions \ref{prop:ppp} and \ref{prop:large}, Theorem \ref{th:quanti} and Proposition \ref{prop:test:sphere}, yield:
\begin{thm}\label{thm:n} Let $k\geq 1$ and let $G$ be a subgroup of isometries of $(\mathbb{S}^{n},h)$, $n>2k$. Let $f\in C^\infty(M)$ be a positive $G-$invariant function and assume that $G$ acts without fixed point (that is $|O_G(x)|\geq 2$ for all $x\in \mathbb{S}^{n}$).  Assume that there exists $p\in \sn$ such that 
$$\frac{|O_G(p)|^\frac{2k}{n}}{f(p)^{\frac{2}{\crit}}}\leq \frac{|O_G(x)|^\frac{2k}{n}}{f(x)^{\frac{2}{\crit}}}$$
for all $x\in \sn$ and that $\nabla^if(p)=0$ for all $i\in\{1,...,n-2k\}$. Then there exists $g\in [h]$ such that $Q_g=f$ and $G\subset Isom_g(\sn)$.
\end{thm}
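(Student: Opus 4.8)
The plan is to read this statement off the machinery assembled in the previous sections, specialised to the round sphere. First I would check that the hypotheses of Theorem \ref{th:quanti} (equivalently, of its corollary Proposition \ref{prop:strict}) hold for $(\sn,h)$: the standard metric $h$ is Einstein with positive scalar curvature $n(n-1)$ and $n>2k$, so Proposition \ref{prop:ppp} guarantees that $P_h$ satisfies $(C)$ and $(PPP)$; and since $G\subset Isom_h(\sn)$ we have $h\in{\mathcal C}_G$, whence ${\mathcal C}_G\neq\emptyset$. Thus for every $q\in(2,\crit)$ the minimizer $u_q\in C^\infty_{G,+}(\sn)$ of Proposition \ref{prop:ss:crit} exists, Proposition \ref{prop:large} gives the a priori upper bound $\mu_f({\mathcal C}_G)\leq\frac{2}{n-2k}\cdot\frac{|O_G(x)|^{2k/n}}{f(x)^{2/\crit}K(n,k)}$ for all $x\in\sn$, and the blow-up dichotomy of Theorem \ref{th:quanti} applies.

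The heart of the argument is to rule out the blow-up alternative, i.e. case (i) of Theorem \ref{th:quanti}. Since $G$ acts without fixed point we have $|O_G(p)|\geq 2$, and by hypothesis $\nabla^i f(p)=0$ for all $i\in\{1,\dots,n-2k\}$; hence Proposition \ref{prop:test:sphere} applies at $p$ and gives
\[
\mu_f({\mathcal C}_G)<\frac{2}{n-2k}\cdot\frac{|O_G(p)|^{\frac{2k}{n}}}{f(p)^{\frac{2}{\crit}}K(n,k)}.
\]
Combining this with the minimality assumption on $p$, namely $\frac{|O_G(p)|^{2k/n}}{f(p)^{2/\crit}}\leq \frac{|O_G(x)|^{2k/n}}{f(x)^{2/\crit}}$ for all $x\in\sn$, and positivity of the factor $\frac{2}{(n-2k)K(n,k)}$, one obtains
\[
\mu_f({\mathcal C}_G)<\frac{2}{n-2k}\cdot\frac{|O_G(x)|^{\frac{2k}{n}}}{f(x)^{\frac{2}{\crit}}K(n,k)}\quad\hbox{ for all }x\in\sn.
\]
This is exactly the hypothesis of Proposition \ref{prop:strict}; it also directly contradicts the equality $\mu_f({\mathcal C}_G)=\frac{2}{n-2k}\cdot\frac{|O_G(x)|^{2k/n}}{f(x)^{2/\crit}K(n,k)}$ that case (i) of Theorem \ref{th:quanti} would assert at the blow-up point $x$. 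Hence case (i) cannot occur.

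Consequently case (ii) of Theorem \ref{th:quanti} (equivalently, Proposition \ref{prop:strict}) holds: there exists $g\in{\mathcal C}_G$, of the form $g=(\mu_f({\mathcal C}_G))^{1/k}u^{\frac{4}{n-2k}}h$ with $u\in C^\infty_{G,+}(\sn)$ solving $P_h u=\frac{n-2k}{2}\mu_f({\mathcal C}_G)fu^{\crit-1}$, such that $Q_g=f$; and since $g\in{\mathcal C}_G$ means by definition $G\subset Isom_g(\sn)$, this $g$ is the metric claimed by the theorem, so the proof is complete.

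I expect no real obstacle in this final assembly — the substance sits entirely in the inputs, above all in Proposition \ref{prop:test:sphere}. The delicate point there is a competition between two powers of $\beta^2-1$: gluing the $m=|O_G(p)|$ standard bubbles centered on the orbit $O_G(p)$ produces an interaction gain $d_\beta\sim\Lambda_{p,G}(\beta^2-1)^{(n-2k)/2}$ in the numerator of $I_h$, whereas the non-constancy of $f$ near $p$ costs at most $O((\beta^2-1)^{(n-2k+1)/2})$ in the denominator; the imposed cancellation $\nabla^i f(p)=0$ for $i\leq n-2k$ is precisely what forces the gain to dominate the cost, so that the glued test function has $I_h$-energy strictly below the one-bubble threshold. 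This is also why in the borderline case $n=2k+1$ of Theorem \ref{th:main} positivity of $f$ alone suffices: there $n-2k=1$, the required cancellation reduces to $\nabla f(x)=0$, and such an $x$ is supplied automatically by case (i) of Theorem \ref{th:quanti}.
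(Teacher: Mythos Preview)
Your proposal is correct and follows exactly the assembly the paper indicates: Proposition \ref{prop:ppp} for $(C)$ and $(PPP)$ on the sphere, Proposition \ref{prop:test:sphere} at $p$ combined with the minimality hypothesis to obtain the strict inequality of Proposition \ref{prop:strict} at every $x$, and then Theorem \ref{th:quanti} case (ii). The paper's own proof is the one-line remark that Propositions \ref{prop:ppp}, \ref{prop:large}, Theorem \ref{th:quanti} and Proposition \ref{prop:test:sphere} yield the result; you have simply written out that assembly explicitly.
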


\end{document}